\newtheorem{thm}{Theorem}
\newtheorem{cor}[thm]{Corollary}
\newtheorem{lem}[thm]{Lemma}
\newtheorem{prop}[thm]{Proposition}
\newtheorem{exam}[thm]{Example}
\def\C{\mathbb{C}}
\def\N{\mathbb{N}}
\def\R{\mathbb{R}}
\def\S{\mathbb{S}}
\title{Pervasive Algebras and Maximal Subalgebras}
\author{Pamela Gorkin and Anthony G. O'Farrell}
\begin{document}
\maketitle

\begin{abstract}
A uniform algebra $A$ on its Shilov boundary $X$ is {\em maximal}
if $A$ is not $C(X)$ 
and there is no uniform algebra properly contained between
$A$ and $C(X)$. It is {\em essentially pervasive} if
$A$ is dense in $C(F)$ whenever $F$ is a proper closed
subset of the essential set of $A$.  If $A$ is maximal,
then it is essentially 
pervasive and proper. 
We explore the gap between
these two concepts. We show the following: (1) If
$A$ is pervasive and proper, and 
has a nonconstant unimodular element, then $A$
contains an infinite descending chain of pervasive subalgebras
on $X$. (2) It is possible to imbed a copy of 
the lattice of all subsets
of $\N$ into the family of pervasive subalgebras of some $C(X)$.
(3) In the other direction, if $A$ is strongly logmodular,
proper and pervasive, then it is maximal. (4) This fails
if the word \lq strongly' is removed. 

We discuss further examples, involving Dirichlet
algebras, $A(U)$ algebras, Douglas algebras,
and subalgebras of $H^\infty(\mathbb{D})$. We develop some new results that 
relate pervasiveness, maximality and relative
maximality to support sets
of representing measures.  
\end{abstract}

{\em Key words and phrases:\/} Uniform algebra, logmodular algebra, pervasive algebra, maximal subalgebra\\
{\em Mathematics Subject Classification (2000):\/} 46J10

\section{Introduction}

This paper is about pervasive uniform algebras, and the connections between
pervasiveness, maximality, and the presence of nonconstant 
unimodular functions.

For a compact Hausdorff space $X$, let $C(X)=C(X,\C)$ denote the 
algebra of complex-valued continuous functions on $X$, equipped with the
topology induced by the uniform norm.  
Concretely, a {\em uniform algebra on $X$}
is a closed subalgebra $A$ of $C(X)$ that  
contains the constants and separates the points of $X$. 
Each uniform algebra is an example of a commutative, semisimple
Banach algebra, i.e a complete normed complex algebra \cite{Ga}.
Each semisimple commutative Banach algebra $A$ may be regarded,
via the Gelfand transform, as an algebra of complex-valued functions
on its maximal ideal space, or character space, $M(A)$.
(The Gelfand transform is defined by
$$ \hat f(\phi) = \phi(f),\ \forall \phi\in M(A), \forall f\in A).$$
Abstractly, the uniform algebras are characterised among
semisimple commutative Banach algebras by the property
that they are complete with respect to the uniform norm
on $M(A)$. For this, and other general facts about uniform
algebras referred to below, see \cite{Ga}.

If $A$ is a uniform algebra on $X$, then $X$ is homeomorphic 
to a closed subset of $M(A)$, when $M(A)$ is given the
weak-star topology inherited from the dual $A^*$ (-- characters
belong to $A^*$).  It is customary to identify $X$ with its
image in $M(A)$. When we do this, it always happens that $X$
includes the {\em Shilov boundary} of $A$, the minimal
closed subset $Y$ of $M(A)$ such that
$$\|f\| \le \sup\{|\psi(f)|: \psi\in Y\},\ \forall f\in A.$$
In general, a uniform algebra $A$ is isometrically isomorphic to the restriction
algebra $A|S$, where $S$ is the Shilov boundary of $A$, and
one normally identifies $A$ with $A|S$.

\medskip
Here are the definitions of the main concepts we study:

\medskip 
$A$ is a {\it maximal subalgebra} of $C(X)$ if $A$ is properly 
contained in $C(X)$ (that is, $A \subset C(X)$) and there is no 
closed algebra $B$ satisfying $A \subset B \subset C(X)$.  We also
express this by saying that $A$ is {\em maximal on $X$}.

\medskip 
Let $A$ be a uniform algebra on $X\subseteq M(A)$. Then $A$ 
is said to be {\em pervasive on $X$}, if for every proper 
compact subset $Y$ of $X$, the restriction algebra 
$$A|Y = \{f|Y: f\in A\}$$ 
is dense in $C(Y)$.  The algebra $A$ is said to be {\em pervasive}
if it is pervasive on its Shilov boundary.  
This concept was first introduced
by Hoffman and Singer \cite{HS}, in a paper devoted to aspects
of maximality.  
It is not hard to see that if $A$ is pervasive on $X$, then
$X$ is in fact the Shilov boundary of $A$.

\medskip
The archetypical example is the {\em disk algebra}, 
consisting of all the functions continuous on the closed unit disk, 
$\textup{clos}(\mathbb{D})$, and analytic on its interior. Its
maximal ideal space is $\textup{clos}(\mathbb{D})$, and
its Shilov boundary is  
the unit circle $\mathbb{S}^1$. 
This 
algebra is pervasive. This amounts to saying that the 
analytic polynomials are
dense in all continuous functions on any proper subarc of the unit
circle. 
This is a special case of Lavrentieff's Theorem \cite[Theorem II.8.7]{Ga},
but may be proved in many ways. 

According to the celebrated 
Wermer Maximality Theorem \cite[Theorem II.5.1]{Ga},
the disk algebra is also maximal, and this suggests that
there might be some connection between being maximal
and being pervasive. 
Hoffman and Singer introduced pervasive algebras, 
motivated by the observation that 
some results of Helson and Quigley, {\it inter alia},
about other uniform algebras that 
behaved like the disk algebra, were, logically, more directly
connected with its pervasiveness than its maximality.
They established some general 
connections between maximality and pervasiveness.

Strictly speaking, maximality and pervasiveness are quite
distinct properties, logically unconnected in the sense
that the truth or falsehood of one tells you nothing about
the other (see Section \ref{basicexamples} below).  
However, one can say that \lq\lq essentially",
maximality implies pervasiveness. More precisely,
if $A$ is maximal, then the restriction of $A$
to its so-called essential set\footnote{For this, and 
other terms used in this introduction and not yet
defined, see the next section.} is pervasive there.
If we confine attention to essential algebras
(those with Shilov boundary equal to the essential set),
then this raises the question of what must be
added to the assumption of
pervasiveness to ensure maximality.

We give examples to show that the gap from pervasiveness
to maximality may be very large. In the most
striking example (cf. Subsection \ref{longcomplicated}), 
the family of pervasive subalgebras
of $C(X)$ contains an isomorphic copy of the lattice
of all subsets of $\N$, the set of natural numbers.

One may ask whether it makes any difference if the algebra
is assumed to be Dirichlet. In fact, in the example
referred to, all the pervasive algebras we construct
are Dirichlet. However, we do show (Section \ref{maximal})
that if the algebra $A$ is assumed strongly logmodular, if $A$ is not maximal then $A$ 
is not pervasive. Since 
Dirichlet algebras are logmodular, the example 
of Subsection \ref{longcomplicated} shows that 
logmodular algebras can be essential, pervasive 
and nonmaximal.

For all connected open subsets $U$ of the Riemann sphere  
$\hat{\mathbb{C}}$, Hoffman and Singer considered
the associated algebra $A(U)$, consisting of those continuous
functions on $\textup{clos}(U)$ that are holomorphic on $U$,
regarded as a uniform algebra on its Shilov boundary, $X$.
They showed that in some cases $A(U)$ is pervasive on $X$.
Gamelin and Rossi \cite{GR} showed that $A(U)$ is in fact
maximal in $C(X)$ whenever $U$ is connected, $U$ is the
interior of its closure $K$, $K\subset\C$ is 
compact, and each $f\in A(U)$ may be approximated
uniformly on $K$ by rational functions with poles off $X$. 
They raised the question of the maximality of more general
$A(U)$, but to date this has not been completely 
resolved, except in the cases when $A(U)$ is Dirichlet
\cite[p. 63, Ex. 1]{Ga}.  See also
\cite{Davie} and subsection \ref{SS:Warc}.
Later, one of the authors and collaborators showed that,
for all connected open subsets $U$ of the Riemann sphere  
$\hat{\mathbb{C}}$,
the associated algebra $A(U)$
is pervasive \cite[Theorem 3.2]{NOS}. Further work
extended this to open Riemann surfaces \cite{OS}.
There is also a characterisation of the open
(not necessarily connected) $U\subset\hat\C$ such that
$A(U)$ is pervasive \cite[Section 4]{NOS}.
If $A(U)$ is maximal on its Shilov boundary $X=\textup{bdy}U$,
then $U$ must be connected.
It remains to be seen whether for all 
connected $U$ (with Shilov boundary
equal to $\textup{bdy}U$)
the algebra $A(U)$ is maximal.

There is reason to suppose that pervasiveness is
intimately connected with complex dimension one, and 
it has been suggested that, apart from uniform algebras
defined on closed subsets of Riemann surfaces, it might
be profitable to study two other kinds: (1) algebras
of $L^\infty$ functions on the unit circle and
other one-dimensional boundaries, particularly
subalgebras and superalgebras  
of $H^\infty(U)$, the algebra of bounded analytic
functions on a one-dimensional open set, and
(2) algebras obtained by taking the uniform closure
of the analytic polynomials on a curve lying in
the boundary of a pseudoconvex domain in $\mathbb{C}^n$.
We have something to say about type (1).  
This is connected to logmodularity.

The maximal ideal space of $H^\infty(\mathbb{D})$
is a large compact space (with cardinality greater 
than that of the continuum).  See \cite[Chapter VIII]{Garnett}
for details about its structure referred to below.
The Shilov boundary $X$
of $H^\infty(\mathbb{D})$ may be identified 
with the maximal ideal space of the self-adjoint 
uniform algebra $L^\infty(\mathbb{S}^1)$
(of essentially-bounded measurable functions
on the circle).  It is an extremally-disconnected
space, i.e. the closure of every open
set is open. The Gelfand transform of the 
identity function $z\mapsto z$ projects $M(H^\infty)$
onto the closed unit disk. The fibres over the points
of the open disk are singletons, but the preimage
of the unit circle (the {\em corona}) is
large and complicated, and $X$ is a subset of it.  
There are many interesting
uniform algebras on $X$:

\begin{enumerate}
\item[$\bullet$] $C(X)$ is just $L^\infty$, by the Stone-Weierstrass Theorem.
\item[$\bullet$] Douglas algebras, those lying between $H^\infty$ and
$L^\infty$.  We show in Subsection \ref{SS:Douglas}
that none of these is pervasive.
\item[$\bullet$] $C=C(\S^1)$ may be regarded as a subalgebra of $C(X)$,
and Sarason \cite{Sarason} showed that the vector space
sum $H^\infty+C$ is a closed subalgebra of $C(X)$. Let 
$$QC = \overline{H^\infty + C} \cap (H^\infty + C),$$ 
where the bar denotes complex conjugation.  
Then
$QC$ is sub-function algebra of $C(X)$, closed
under complex conjugation, and hence (by Stone-Weierstrass)
equal to $C(M(QC))$. The space $M(QC)$ may be regarded as
a quotient space of $X$.  Sarason introduced the algebra
$QA= QC\cap H^\infty$.   Wolff \cite{Wolff} showed
that $QA$ is a Dirichlet algebra. 
This turns out to be maximal and pervasive on $M(QC)$. 
\end{enumerate}

It might seem that the highly-disconnected nature of
$M(L^\infty)$ has a lot to do with the fact that
there are no maximal or pervasive Douglas algebras.
However,
there can be maximal algebras
on  a totally-disconnected space. The existence
of such algebras was first established by
Rudin (cf. \cite[Section 4]{HS}).

In what follows, 
in Section~\ref{background}, we present some background
results and some technical lemmas 
that will aid us throughout the paper. In Section~\ref{maximal}
we discuss some basic examples, and then  provide examples
to show that there may exist many
essentially-pervasive subalgebras of a given $C(X)$,
so that in general the gap between pervasiveness
and maximality may be very large. 
Following this, in Section~\ref{basicexamples}, 
we provide further examples of pervasive algebras as 
well as non-examples. Our examples explore the 
different possible relations between maximal 
subalgebras and pervasive subalgebras.

In Section~\ref{minimal}, we introduce some
further concepts related to support sets
for representing measures, 
and give
give some new results about pervasiveness,
maximality, and so-called relative maximality, using 
these concepts.   

We close with some questions.
\color{black}

\bigskip

\section{Preliminaries}\label{background}
\subsection{Notation and Definitions}
Throughout the paper, $A$ will denote a uniform algebra, and
$X$ its Shilov boundary. The group of invertible elements
of $A$ is denoted $A^{-1}$, and the set of continuous
functions $x\mapsto |f(x)|$
with $f\in A^{-1}$ by $|A^{-1}|$. In the same spirit,
the set of functions $\log|f|$, for 
$f\in A^{-1}$, is
denoted $\log|A^{-1}|$, and the linear vector space of real parts
$\Re f$ of the $f\in A$ is denoted $\Re A$.
For instance, we have 
 $$\Re C(X)=C(X,\R),  \, C(X)^{-1}=C(X,\C\setminus\{0\}),$$
$$|C(X)|=C(X,[0,+\infty))~\textup{and}~\log|C(X)^{-1}|=\Re C(X).$$

The algebra $A$ is said to be {\em logmodular} if $\log|A^{-1}|$
is dense in $C(X,\R)$. It is said to be {\em strongly logmodular}
if $\log|A^{-1}|=C(X,\R)$.  It is said to be {\em Dirichlet}
if $\Re A$ is dense in $C(X,\R)$.  Each Dirichlet
algebra is logmodular.

A subset $F$ of $X$ is a {\em set of antisymmetry} for $A$
if every function $f \in A$ that is real valued on $F$ is constant. 
The most important fact about antisymmetry is the 
following result of E. Bishop \cite[Theorem II.13.1, p. 60]{Ga}.
 
\begin{thm}[Bishop Antisymmetric Decomposition Theorem]  Let $A$ be a uniform algebra on $X$. Let $\{E_\alpha\}$ be the family of maximal sets of antisymmetry of $A$. Then the $E_\alpha$ are closed disjoint subsets of $X$ whose union is $X$. Each restriction algebra $A|E_\alpha$ is closed. If $f \in C(X)$ and $f|E_\alpha \in A|E_\alpha$ for all $E_\alpha$, then $f \in A$.\end{thm}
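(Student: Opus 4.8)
The plan is to follow Glicksberg's approach, where the decisive information comes from an extreme-point analysis of the annihilator $A^{\perp}=\{\mu:\mu\perp A\}$, whose closed unit ball $K$ is convex and weak-$*$ compact. I would first dispose of the elementary facts about the partition: a one-point set is a set of antisymmetry; the closure of a set of antisymmetry is again one (if $f\in A$ is real on $\overline F$ it is real, hence constant, on $F$, hence on $\overline F$); and the union of any family of sets of antisymmetry with pairwise nonempty intersection is again a set of antisymmetry (an $f\in A$ real on the union is real, hence constant, on each member, and the constants must agree). Zorn's lemma then produces, through every point of $X$, a maximal set of antisymmetry, which is automatically closed, and two maximal sets of antisymmetry that met would have union again a set of antisymmetry, forcing them to coincide. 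This gives the closed, pairwise disjoint sets $E_\alpha$ covering $X$, and shows as well that every set of antisymmetry is contained in some $E_\alpha$.

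The heart of the matter is a lemma of Glicksberg: the closed support $S:=\operatorname{supp}(\mu_0)$ of any extreme point $\mu_0$ of $K$ is a set of antisymmetry. I would prove it by contradiction. We may assume $A\neq C(X)$, so that $A^{\perp}\neq\{0\}$ and every extreme point of $K$ has norm one. Suppose $g\in A$ is real-valued but nonconstant on $S$; after a real affine change of variable (which keeps $g$ in $A$) we may assume $0\le g\le1$ on $S$. For every $h\in A$ we have $hg\in A$, hence $\int hg\,d\mu_0=0$; thus $g\mu_0\in A^{\perp}$ and likewise $(1-g)\mu_0=\mu_0-g\mu_0\in A^{\perp}$. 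Since $g$ is continuous and nonconstant on $\operatorname{supp}(\mu_0)$ it is neither $|\mu_0|$-a.e.\ $0$ nor $|\mu_0|$-a.e.\ $1$, so $t:=\int g\,d|\mu_0|$ satisfies $0<t<1$, with $\|g\mu_0\|=t$ and $\|(1-g)\mu_0\|=1-t$. Then
$$ \mu_0 \;=\; t\cdot\frac{g\mu_0}{t}\;+\;(1-t)\cdot\frac{(1-g)\mu_0}{1-t} $$
displays $\mu_0$ as a convex combination of two norm-one members of $A^{\perp}$, hence of $K$; extremality forces $g\mu_0=t\mu_0$, i.e.\ $g\equiv t$ on $S$, which is a contradiction.

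Next comes the patching argument, which I would run as follows. Let $f\in C(X)$ satisfy $f|E_\alpha\in A|E_\alpha$ for every $\alpha$; since $A$ is closed it suffices, by Hahn--Banach, to show $\int f\,d\mu=0$ for every $\mu\in K$. Put $c:=\sup\{\,|\int f\,d\mu|:\mu\in K\,\}$; if $c=0$ we are done, so suppose $c>0$. By weak-$*$ compactness the supremum is attained, and after multiplying a maximising measure by a unimodular scalar (which stays in $K$) we get $\mu_1\in K$ with $\int f\,d\mu_1=c$. The set $K_0:=\{\mu\in K:\int f\,d\mu=c\}$ is then nonempty, convex and weak-$*$ compact, so by Krein--Milman it has an extreme point $\mu_0$; moreover $\mu_0$ is extreme in $K$, since if $\mu_0=\tfrac12(\nu_1+\nu_2)$ with $\nu_i\in K$ then $|\int f\,d\nu_i|\le c$ while their mean equals $c$, so strict convexity of the disk of radius $c$ forces $\int f\,d\nu_i=c$, whence $\nu_i\in K_0$ and $\nu_1=\nu_2$. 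By Glicksberg's lemma $S:=\operatorname{supp}(\mu_0)$ is a set of antisymmetry, so $S\subseteq E_\alpha$ for some $\alpha$; picking $h\in A$ with $h|E_\alpha=f|E_\alpha$ we get $\int f\,d\mu_0=\int_S f\,d\mu_0=\int_S h\,d\mu_0=\int_X h\,d\mu_0=0$, contradicting $\int f\,d\mu_0=c>0$. Hence $f\in A$. (Word for word, the same argument yields the sharper statement with $A|E_\alpha$ replaced by its uniform closure.)

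Finally, the closedness of each $A|E_\alpha$, which I expect to be the most delicate point. I would deduce it from the fact that a maximal set of antisymmetry $E$ is a $p$-set: for every neighbourhood $V$ of $E$ and every $\delta>0$ there is $k\in A$ with $\|k\|\le1$, $k\equiv1$ on $E$ and $|k|<\delta$ off $V$. This in turn rests on the localisation property $\chi_E\mu\in A^{\perp}$ for all $\mu\in A^{\perp}$ — obtained by splitting $\mu$ and again invoking the extreme-point lemma — together with the standard equivalence between that property and being a $p$-set. Granting it, I would finish with a telescoping construction: given $\phi\in\overline{A|E}$, pick $b_n\in A$ with $\sum_n\|(b_{n+1}-b_n)|E\|_E<\infty$, then use the $p$-set property to choose $k_n\in A$ equal to $1$ on $E$ and so small off a thin neighbourhood of $E$ that $e_n:=(b_{n+1}-b_n)k_n$ is uniformly small on all of $X$ while $e_n|E=(b_{n+1}-b_n)|E$; then $b_1+\sum_n e_n$ converges in the closed algebra $A$ to a function restricting to $\phi$ on $E$, so $\phi\in A|E$. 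The two places where essentially all the work lies are Glicksberg's extreme-point lemma and, inside this last step, the passage from it to the $p$-set property; everything else is bookkeeping.
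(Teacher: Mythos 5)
The paper does not actually prove this theorem---it is quoted verbatim from Gamelin (Theorem II.13.1), so the only meaningful comparison is with the classical Glicksberg/Gamelin argument, which is precisely the route you take. Your treatment of the partition into maximal antisymmetric sets and of the patching assertion (the final sentence of the theorem) is complete and correct: the extreme-point lemma is proved carefully (including the observation that $g$ cannot be $|\mu_0|$-a.e.\ equal to $0$ or $1$ because it is continuous and nonconstant on the closed support), and the face argument showing that an extreme point of $K_0$ is already extreme in $K$ is right.

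The gap is in the third assertion, the closedness of each $A|E_\alpha$, which you yourself flag as the delicate point. Your chain is: localization ($\chi_{E_\alpha}\mu\in A^{\perp}$ for every $\mu\in A^{\perp}$) implies $E_\alpha$ is a $p$-set, which by telescoping implies closedness---but only the last arrow is actually carried out. The localization statement does not follow from the extreme-point lemma by a one-line ``splitting''; it needs a genuine exhaustion/Zorn argument over mutually singular pieces of $\mu$ supported on sets of antisymmetry. Worse, the ``standard equivalence'' between localization and the $p$-set property is Glicksberg's theorem, whose usual proof establishes the closedness of $A|E$ as an intermediate step, so invoking it as a black box here comes uncomfortably close to assuming what is to be proved. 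A more economical, self-contained route uses only what you have already set up: first prove the telescoping/extension lemma for arbitrary $p$-sets (as you do); then show directly that a maximal antisymmetric set $E$ is a $p$-set by letting $F$ be the intersection of all $p$-sets containing $E$ (itself a $p$-set) and noting that if $f\in A$ is real-valued and nonconstant on $F$, then $f$ is constant, say equal to $c$, on the antisymmetric set $E$, and $F\cap f^{-1}(c)$---a peak set of the closed algebra $A|F$, hence a $p$-set of $A$---is a strictly smaller $p$-set containing $E$, a contradiction; thus $F$ is antisymmetric and $F=E$ by maximality, and closedness follows from the telescoping step you already have.
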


The algebra $A$ is said to be {\em antisymmetric} if 
$X$ is a set of antisymmetry. The {\em essential set} is the 
minimal closed set $E$ in $X$ such that for 
any continuous function $f$ if  $f=0$ 
on $E$, then
$f \in A$. If $E=X$, then $A$ is said to be an {\em essential algebra}. 
The restriction $A|E$ of $A$ to its essential set
is closed in $C(E)$ \cite[Theorem 2.8.1, p. 145]{Browder},
and $A$ is said to be {\em essentially pervasive} if $A|E$ is pervasive.
The algebra $A$ is said to be {\em analytic} if every function in the algebra that vanishes on a nonempty open subset of $X$ is identically zero.

\subsection{Some useful results}\label{threeusefulresults}

The following results will be used throughout the paper.  They 
summarize results that appear in \cite[pp. 220-1]{HS}. 

\begin{prop}[Hoffman-Singer] \label{HS2} Let $A$ be a uniform algebra on $X$. Then each of the following implies the subsequent statement:
\begin{enumerate}
\item $A$ is proper and pervasive.
\item $A$ is analytic on $X$.
\item $A$ is an integral domain.
\item $A$ is antisymmetric.
\item $A$ is essential on $X$.
\end{enumerate}
Moreover, every algebra that is essential and maximal is pervasive.
\end{prop}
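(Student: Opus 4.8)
The plan is to prove the chain $(1)\Rightarrow(2)\Rightarrow(3)\Rightarrow(4)\Rightarrow(5)$ one link at a time, and then to establish the final sentence separately, since it rests on maximality rather than on any of the links. Three of the links are short; the implication $(1)\Rightarrow(2)$ carries the real content, and I expect it to be the main obstacle. The idea there is that failure of analyticity forces the essential set of $A$ to be a \emph{proper} closed subset of $X$, after which pervasiveness leaves no room and collapses $A$ onto all of $C(X)$, contradicting properness.

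In detail, for $(1)\Rightarrow(2)$ suppose $A$ is proper and pervasive but not analytic, so some $f\in A$ vanishes on a nonempty open set $U$ while $f\not\equiv 0$. Put $Z=\{x:f(x)=0\}$, a proper closed set containing $U$, and $Y=X\setminus U$. First I would show that every $G\in C(X)$ with $G|_Z=0$ lies in $A$: such a $G|_Y$ vanishes on $Z\cap Y$, the zero set of $f|_Y$, hence lies in the closed ideal of $C(Y)$ generated by $f|_Y$, so $G|_Y=\lim_n (f|_Y)g_n$ uniformly on $Y$ for suitable $g_n\in C(Y)$; choosing $a_n\in A$ with $\|a_n-g_n\|_Y\to 0$ by pervasiveness, one checks that $fa_n\in A$ tends uniformly on $X=\overline{U}\cup Y$ to $G$ (both $fa_n$ and $G$ vanish on $\overline{U}$, while $fa_n\to G|_Y$ on $Y$), so $G\in A$. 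Thus the essential set $E$ of $A$ satisfies $E\subseteq Z\subsetneq X$. Next I would show that a pervasive algebra with $E\subsetneq X$ must equal $C(X)$: the set $J_E=\{g\in C(X):g|_E=0\}$ lies in $A$ by definition of $E$; picking $p\in X\setminus E$ and applying pervasiveness to the proper closed set $E\cup\{p\}$, on which $p$ is isolated, gives $A|E$ dense in $C(E)$, hence equal to $C(E)$ because $A|E$ is closed \cite[Theorem 2.8.1]{Browder}; then any $G\in C(X)$ differs from some $a\in A$ by an element of $J_E\subseteq A$, so $A=C(X)$. This contradicts properness, so $A$ is analytic.

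The other links are quick. For $(2)\Rightarrow(3)$: if $fg=0$ with $f\not\equiv 0$, then $g$ vanishes on the nonempty open set $\{f\ne 0\}$, so $g\equiv 0$ by analyticity. For $(3)\Rightarrow(4)$: if $A$ is not antisymmetric there is a nonconstant real-valued $h\in A$; the closed subalgebra of $C(X)$ generated by $1$ and $h$ equals $\{\varphi\circ h:\varphi\in C(h(X))\}$ by the Weierstrass theorem, hence lies in $A$, and with $c$ chosen strictly between $\min h(X)$ and $\max h(X)$ the elements $\varphi\circ h$ and $\psi\circ h$, where $\varphi(t)=\max(t-c,0)$ and $\psi(t)=\max(c-t,0)$, are nonzero members of $A$ with product $0$; so $A$ is not an integral domain. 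For $(4)\Rightarrow(5)$: if $A$ is not essential then $E\subsetneq X$, and (discarding the trivial one-point case, in which $A=C(X)$ would be antisymmetric) $E\ne\emptyset$; choosing $p\in X\setminus E$, a Urysohn function $G$ with $G|_E=0$ and $G(p)=1$ is a nonconstant real-valued element of $A$, contradicting antisymmetry.

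Finally, for the closing assertion, let $A$ be essential and maximal and let $Y\subsetneq X$ be a proper closed set. Form the closed subalgebra $\tilde B=\{f\in C(X):f|_Y\in\overline{A|Y}\}$ of $C(X)$; then $A\subseteq\tilde B\subseteq C(X)$, so by maximality $\tilde B=A$ or $\tilde B=C(X)$. If $\tilde B=C(X)$, then $C(Y)=\{f|_Y:f\in C(X)\}\subseteq\overline{A|Y}$, i.e.\ $A|Y$ is dense in $C(Y)$, which is precisely what pervasiveness on $X$ demands. If instead $\tilde B=A$, then $\{g\in C(X):g|_Y=0\}\subseteq A$, forcing the essential set of $A$ into $Y$ and hence $X\subseteq Y$, contrary to $Y\subsetneq X$. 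So the first alternative holds for every proper closed $Y$, and $A$ is pervasive on $X$.
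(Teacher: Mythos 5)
The paper gives no proof of this proposition --- it is quoted as a summary of results from Hoffman--Singer \cite[pp.\ 220--221]{HS} --- so there is nothing internal to compare against; what you have written is essentially the classical argument from that source, and it is correct. The one link with real content, $(1)\Rightarrow(2)$, is handled properly: your verification that $G$ vanishing on $Z$ forces $G\in A$ (via the closed ideal of $C(Y)$ generated by $f|_Y$ and pervasiveness on $Y=X\setminus U$) is sound, as is the collapse $A=C(X)$ once $E\subsetneq X$, using that $A|E$ is closed. Two small remarks. First, in that collapse step the detour through $E\cup\{p\}$ is both unnecessary and slightly risky: $E\cup\{p\}$ could equal $X$ (e.g.\ if $X\setminus E$ is a single point), in which case pervasiveness does not apply to it; but you only need $A|E$ dense in $C(E)$, which follows from pervasiveness applied to the proper closed set $E$ itself (the case $E=\emptyset$ being trivial since then $A=C(X)$ outright). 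Second, the implication $(4)\Rightarrow(5)$ genuinely fails when $X$ is a single point ($C(\{pt\})$ is antisymmetric but its essential set is empty); you flag and discard this degenerate case explicitly, which is the honest thing to do and is consistent with the standing convention in this literature. The remaining links and the final assertion (essential $+$ maximal $\Rightarrow$ pervasive, via the algebra $\tilde B=\{f: f|_Y\in\overline{A|Y}\}$ and the dichotomy forced by maximality) are exactly the standard arguments and are complete.
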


We note that if $A$ is antisymmetric, then $M(A)$
is connected, by the Shilov idempotent theorem \cite[Cor. III.6.5, p. 88]{Ga}.

Applying this to maximal subalgebras, we get:

\begin{prop}[Hoffman-Singer] \label{HS1} Let $A$ be a maximal 
(proper, closed) subalgebra of $C(X)$. Then the 
following are equivalent:
\begin{enumerate}
\item $A$ is pervasive on $X$.
\item $A$ is analytic on $X$.
\item $A$ is an integral domain.
\item $A$ is antisymmetric on $X$.
\item $A$ is essential on $X$.
\end{enumerate}
\end{prop}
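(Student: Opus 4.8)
The plan is to deduce this immediately from Proposition~\ref{HS2}, without any fresh work. The first observation I would make is that a \emph{maximal} subalgebra of $C(X)$ is, by definition, properly contained in $C(X)$; so for the algebra $A$ under consideration here, statement~(1) of the present proposition (``$A$ is pervasive on $X$'') is the same as statement~(1) of Proposition~\ref{HS2} (``$A$ is proper and pervasive''). Consequently the chain of implications recorded in Proposition~\ref{HS2} applies verbatim and gives $(1)\Rightarrow(2)\Rightarrow(3)\Rightarrow(4)\Rightarrow(5)$ among the five statements here.

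To close the cycle of equivalences, I would invoke the final assertion of Proposition~\ref{HS2}: every algebra that is essential and maximal is pervasive. Since $A$ is assumed to be maximal, statement~(5) (that $A$ is essential on $X$) together with that assertion yields that $A$ is pervasive on $X$, which is statement~(1). Hence $(5)\Rightarrow(1)$, and combining this with the forward chain $(1)\Rightarrow(2)\Rightarrow(3)\Rightarrow(4)\Rightarrow(5)$ shows that all five statements are mutually equivalent.

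I do not expect any real obstacle here, since the substantive content has already been isolated in Proposition~\ref{HS2}; the proof is purely a matter of bookkeeping. The one point to check carefully is that the hypothesis ``$A$ is a maximal (proper, closed) subalgebra of $C(X)$'' is being used twice in distinct ways: its properness is what lets us identify statement~(1) here with statement~(1) of Proposition~\ref{HS2}, while its maximality is exactly what is needed to apply the ``moreover'' clause of that proposition in the step $(5)\Rightarrow(1)$.
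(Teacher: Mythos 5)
Your proposal is correct and is exactly the argument the paper intends: the paper derives Proposition~\ref{HS1} from Proposition~\ref{HS2} with the phrase ``Applying this to maximal subalgebras,'' using properness of a maximal algebra to get the forward chain and the ``moreover'' clause to close the cycle $(5)\Rightarrow(1)$. No gaps.
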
 

\begin{cor} If the uniform algebra $A$ is maximal in $C(X)$,
then $A$ is essentially pervasive.
\end{cor}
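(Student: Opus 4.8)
The plan is to deduce this from Proposition~\ref{HS1}. Write $E$ for the essential set of $A$; I will show that $A|E$ is a proper, closed subalgebra of $C(E)$ that is maximal in $C(E)$ and essential on $E$, and then quote from Proposition~\ref{HS1} the implication that an essential maximal subalgebra is pervasive. The starting point is the standard description of $A$ in terms of $E$: if $f \in C(X)$ and $f|E \in A|E$, pick $g \in A$ with $g|E = f|E$; then $f - g$ vanishes on $E$, so $f - g \in A$ by the definition of the essential set, and hence $f \in A$. Thus $A = \{f \in C(X) : f|E \in A|E\}$. In particular $A|E \neq C(E)$ (otherwise $A = C(X)$, against maximality), so $A|E$ is proper; it is closed in $C(E)$ by the result recalled in Section~\ref{background}; and it separates the points of $E$, being the restriction of an algebra that separates the points of $X$.

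The substantive point is that $A|E$ is maximal in $C(E)$. Suppose $B$ is a closed subalgebra of $C(E)$ with $A|E \subseteq B \subsetneq C(E)$, and put $\tilde B = \{f \in C(X) : f|E \in B\}$. Then $\tilde B$ is a closed subalgebra of $C(X)$ containing the constants and, since it contains $A$, separating points, and $A \subseteq \tilde B$. By the Tietze extension theorem every element of $B$ is the restriction to $E$ of some element of $\tilde B$, so $\tilde B|E = B$; consequently $\tilde B \neq C(X)$ (because $B \neq C(E)$) and $\tilde B \neq A$ (because $B \neq A|E$). This exhibits a closed algebra strictly between $A$ and $C(X)$, contradicting the maximality of $A$; so $A|E$ is maximal in $C(E)$. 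I expect this pull-back argument to be the only step needing care, the thing to check being that $\tilde B$ really is a uniform algebra lying strictly between $A$ and $C(X)$.

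It remains to see that $A|E$ is essential on $E$, and then to conclude. Let $E' \subseteq E$ be the essential set of $A|E$. If $g \in C(X)$ vanishes on $E'$, then $g|E$ vanishes on $E'$, so $g|E \in A|E$, and hence $g \in A$ by the displayed formula for $A$; thus $E'$ is a closed subset of $X$ with the defining property of the essential set of $A$, so by minimality $E \subseteq E'$ and $E' = E$. Now apply Proposition~\ref{HS1} to the maximal proper closed subalgebra $A|E$ of $C(E)$: being essential on $E$, it is pervasive on $E$. Finally, a uniform algebra that is pervasive on a compact space has that space as its Shilov boundary, so $A|E$ is pervasive, i.e. $A$ is essentially pervasive.
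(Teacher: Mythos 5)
Your proof is correct and follows the same route as the paper: restrict to the essential set $E$, observe that $A|E$ is a proper closed maximal subalgebra of $C(E)$ that is essential on $E$, and invoke Proposition~\ref{HS1} (equivalently, the Hoffman--Singer fact that essential plus maximal implies pervasive). The paper states the reduction in one line, whereas you supply the pull-back argument $\tilde B=\{f\in C(X):f|E\in B\}$ for the maximality of $A|E$ and the verification that $A|E$ is essential on $E$; these details are accurate and are exactly what the paper leaves to the reader.
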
 
\begin{proof} If $E$ is the essential set of $A$, then
$A|E$ is maximal in $C(E)$.
Now apply the proposition to the algebra $A|E$.
\end{proof}

\subsection{Support Sets}
Given $\phi\in M(A)$, a (Borel probability) measure $\mu$ on $X$ for which 
$$\phi(f) = \int_X f \, d\mu$$ for all $f \in A$ is said to 
{\em represent $\phi$ on $A$}, or, simply, to be {\em a representing measure} 
when the context is clear.  Every $\phi$ has at least one
representing measure, and exactly one if if $A$ is logmodular \cite[Theorem II.4.2, p.38]{Ga}.

We denote the closed support of a measure $\mu$ by $\textup{supp}\, \mu$. 
For $\phi\in M(A)$, let $\mathcal{S}(\phi)$ be the family
of all sets $\textup{supp}\, \lambda$, where $\lambda$ is a measure on $X$
that represents $\phi$ on $A$. Elements of $\mathcal{S}(\phi)$ are
called {\em support sets} for $\phi$. For each $\phi$, 
there always exists at least
one minimal support set, which is either $\{\phi\}$
or is a perfect set \cite[Theorem II.2.3, p. 33]{Ga}.
For $\phi\in M(A)$, the {\em outer support of $\phi$},
denoted
$\overline{\textup{supp}\,}\phi$, is the closure
of the union
of all the support sets for $\phi$. 

We say that a measure $\lambda\in\mathcal{S}(\phi)$
{\em represents $\phi$ remotely} if $\lambda$ has no point mass 
at $\phi$.  If $\phi\in M(A)\setminus X$, then
each representing measure for $\phi$ represents it
remotely. The points of $X$ that do not have remote representing
measures are precisely the p-points, or generalised peak points
\cite[Theorem II.11.3, and a comment on p.59]{Ga}.  
We denote by $\mathcal{D}(A)$ the set of all
characters $\phi\in M(A)$ that have a remote representing
measure.
The {\em core remote support of $\phi$},
denoted by
$\underline{\textup{supp}}\, \phi$, is the intersection
of the supports of the remote representing measures for 
$\phi$. In general, this may be empty.
Evidently, $\underline{\textup{supp}}\, \phi\subseteq
\overline{\textup{supp}}\,\phi$, and if it happens that 
$\phi\in\mathcal{D}(A)$
has a unique representing measure, then its support
coincides with both the core remote and the outer supports of $\phi$.
We collect below several results that we will use in future sections.

The next proposition relies on the basic fact
(first exploited in connection with
pervasiveness by \v{C}erych \cite{Cerych})
that $A$ is dense in $C(X)$  
if and only if there exists no nonzero
annihilating measure for $A$
on $X$.

\begin{lem}\label{lemma1} Let $A$ be a uniform algebra on $X$ and let 
$\lambda$ remotely represent some $\phi\in\mathcal{D}(A)$. 
Let $E=\textup{supp} \,\lambda$.
Then $A|E$ is not dense in $C(E)$.\end{lem}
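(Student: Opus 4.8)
The plan is to use the characterization of density recalled just before the statement: $A|E$ is dense in $C(E)$ if and only if there is no nonzero measure on $E$ that annihilates $A|E$, equivalently no nonzero measure supported on $E$ that annihilates $A$. So it suffices to produce a single nonzero annihilating measure for $A$ that is supported on $E=\textup{supp}\,\lambda$. Since $\phi\in\mathcal{D}(A)$, we may choose a second remote representing measure $\lambda'$ for $\phi$; indeed, every remote representing measure can be used, but the key point is that $\phi$ has at least one, and its support is contained in $\overline{\textup{supp}}\,\phi$, not necessarily in $E$. To stay inside $E$, I would instead argue directly with $\lambda$ itself.

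First I would note that, because $\lambda$ represents $\phi$ remotely, $\lambda$ has no atom at $\phi$; in particular, if $\phi\in X$ then $\lambda(\{\phi\})=0$, and if $\phi\notin X$ there is nothing to check. Now pick any point $x_0\in E=\textup{supp}\,\lambda$ with $x_0\ne\phi$ (such a point exists: a support set is either $\{\phi\}$ or perfect, and the case $E=\{\phi\}$ would force $\lambda=\delta_\phi$, contradicting remoteness, so $E$ is infinite). The functional $f\mapsto f(x_0)-\int_X f\,d\lambda = f(x_0)-\phi(f)$ is given by the measure $\delta_{x_0}-\lambda$, which annihilates $A$ and is supported on $E\cup\{x_0\}=E$. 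It remains to check this measure is nonzero: if $\delta_{x_0}-\lambda=0$ then $\lambda=\delta_{x_0}$, so $\phi(f)=f(x_0)$ for all $f\in A$, forcing $\phi=x_0\in X$ and $\lambda(\{\phi\})=\lambda(\{x_0\})=1\ne 0$, contradicting remoteness of $\lambda$. Hence $\delta_{x_0}-\lambda$ is a nonzero measure on $E$ annihilating $A$, so by the \v{C}erych fact $A|E$ is not dense in $C(E)$.

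The main thing to get right is the choice of $x_0\in E\setminus\{\phi\}$ and the verification that the resulting measure is nonzero; both reduce to the single observation that $\lambda$ being remote rules out $\lambda=\delta_x$ for any point $x$. I do not expect a genuine obstacle here — the content is entirely in invoking the \v{C}erych duality criterion and the structural facts about minimal support sets recalled in the Support Sets subsection. One should also double-check that $\delta_{x_0}$ is indeed supported on $E$, which holds precisely because $x_0$ was chosen in $\textup{supp}\,\lambda=E$.
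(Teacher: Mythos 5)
There is a genuine error at the heart of your argument: the measure $\delta_{x_0}-\lambda$ does \emph{not} annihilate $A$ in general. Applied to $f\in A$ it gives $f(x_0)-\int f\,d\lambda = f(x_0)-\phi(f)$, and this vanishes for all $f\in A$ only if $\phi$ happens to be the evaluation character at $x_0$. But $x_0$ is merely an arbitrary point of $\textup{supp}\,\lambda$, and $\phi$ need not be evaluation at any point of $X$ at all. Concretely, take $A$ the disk algebra on $X=\S^1$, $\phi$ evaluation at $0$, and $\lambda$ normalized arc length (which remotely represents $\phi$, with $E=\S^1$); then for $x_0=1$ and $f(z)=z$ you get $f(1)-\phi(f)=1-0=1\neq 0$, so $\delta_{1}-\lambda$ is not orthogonal to $A$. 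Your reduction to the \v{C}erych duality criterion is the right framework, and your observations that $E\neq\{\phi\}$ and that remoteness rules out $\lambda=\delta_x$ are fine, but the annihilating measure you exhibit is not one.

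The repair is to exploit the \emph{multiplicativity} of $\phi$ rather than trying to cancel $\lambda$ against a point mass. This is what the paper does: pick $b\in E$ with $b\neq\phi$ (possible since $E\neq\{\phi\}$), choose $f\in\ker\phi$ with $f(b)=1$ (possible because the Gelfand transforms of elements of $A$ separate points of $M(A)$), and consider $f\lambda$. For every $g\in A$ one has $\int gf\,d\lambda=\phi(gf)=\phi(g)\phi(f)=0$, so $f\lambda\perp A$; it is supported in $E$ by construction; and it is nonzero because $\Re f>\tfrac12$ on a neighborhood $N$ of $b$ and $\lambda(N)>0$ since $b\in\textup{supp}\,\lambda$. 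Note that the measure $\delta_{x}-\lambda$ you had in mind is the standard annihilator only in the special case where $\phi$ is evaluation at a point $x\in X$, and even then its support is $\{x\}\cup E$, which need not be contained in $E$.
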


\begin{proof}  Since $E$ is not $\{\phi\}$,
we may choose $b\in E$, $b\not=\phi$. Choose $f \in~\textup{ker}(\phi)$ with $f(b)=1$.
Choose a neighborhood $N$ of $b$ such that $\Re f>\frac12$ on $N$.
Then $\Re\int_N f d\lambda>0$, so $f\lambda$ is a nonzero measure on $E$. 
But $f\lambda\perp A$, so $A|E$ is not dense in $C(E)$. 
\end{proof}

\begin{prop} Suppose $A$ is proper and pervasive on $X$, 
and $\phi\in M(A)$ is remotely represented by a measure $\lambda$ on $X$. 
Then supp\,$\lambda=X$.\label{HS?} 
\end{prop}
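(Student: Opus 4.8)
The plan is to obtain this at once from Lemma~\ref{lemma1} combined with the definition of pervasiveness. First I would note that, since $\lambda$ remotely represents $\phi$, by the definition of $\mathcal{D}(A)$ we have $\phi\in\mathcal{D}(A)$, so Lemma~\ref{lemma1} does apply to $\lambda$. Set $E=\textup{supp}\,\lambda$. Because $\lambda$ is a Borel probability measure on the compact Hausdorff space $X$, the support $E$ is a nonempty closed, hence compact, subset of $X$.

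Lemma~\ref{lemma1} then tells us that $A|E$ is not dense in $C(E)$. On the other hand, $A$ is pervasive on $X$, which by definition means $A|Y$ is dense in $C(Y)$ for \emph{every} proper compact subset $Y$ of $X$. Applying this with $Y=E$ shows that $E$ cannot be a proper subset of $X$; since $E\subseteq X$, we conclude $E=X$, which is exactly the assertion.

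The only items needing verification beyond the quoted lemma are that $E$ is nonempty and compact, both of which are automatic for the support of a probability measure on a compact space, so there is really no obstacle: the proposition is a direct corollary of Lemma~\ref{lemma1}. (The properness hypothesis is not used in the argument itself; it serves to place us in the setting where the conclusion has content — recall that for $\phi\in M(A)\setminus X$ every representing measure is automatically remote, while for $\phi\in X$ a remote representing measure exists precisely when $\phi$ is not a p-point.)
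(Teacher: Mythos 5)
Your proof is correct and is exactly the paper's argument: the paper likewise deduces the proposition immediately from Lemma~\ref{lemma1} together with the definition of pervasiveness. Your added observations (nonemptiness and compactness of the support, and the dispensability of the properness hypothesis) are accurate but not needed.
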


\begin{proof} This is immediate from the lemma.
\end{proof}

\begin{cor}\label{C:supp} Suppose $A$ is pervasive on $X$. 
Let $\phi\in\mathcal{D}(A)$. Then 
$\underline{\textup{supp}}\,\phi = X$. 
\end{cor}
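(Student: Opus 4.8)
The plan is to derive Corollary~\ref{C:supp} directly from Proposition~\ref{HS?} by unpacking the definition of the core remote support. First I would recall that for $\phi\in\mathcal{D}(A)$, the core remote support $\underline{\textup{supp}}\,\phi$ is by definition the intersection $\bigcap\{\textup{supp}\,\lambda : \lambda \text{ remotely represents } \phi\}$, and that since $\phi\in\mathcal{D}(A)$ this family of remote representing measures is nonempty.

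Next I would invoke Proposition~\ref{HS?}: the hypothesis there is that $A$ is proper and pervasive on $X$ — but note that pervasiveness on $X$ already forces $A$ to be proper unless $X$ is a single point (if $A=C(X)$ and $X$ has more than one point, take $Y$ to be a two-point subset; more to the point, if $\phi$ has a remote representing measure then $X\neq\{\phi\}$, and one could also observe via Proposition~\ref{HS2} that a pervasive proper algebra is essential, but the cleanest route is simply that the existence of a remote representing measure already rules out the degenerate case). So for each remote representing measure $\lambda$ for $\phi$, Proposition~\ref{HS?} gives $\textup{supp}\,\lambda = X$. Hence every set in the family over which we intersect equals $X$, and therefore the intersection is $X$ as well, i.e. $\underline{\textup{supp}}\,\phi = X$.

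The only point requiring a word of care is the properness hypothesis in Proposition~\ref{HS?}: one must check it is automatically satisfied here. If $A$ were not proper, then $A = C(X)$, and $C(X)$ has a remote representing measure for $\phi$ only if $X$ is not the single point $\{\phi\}$ — but $C(X)$ is pervasive on $X$ precisely when $X$ is a single point (for any larger $Y\subsetneq X$ with at least two points... actually $C(X)|Y = C(Y)$ is trivially dense, so $C(X)$ is pervasive on every $X$). So the genuine observation is: if $A$ is pervasive on $X$ and $\phi\in\mathcal{D}(A)$, pick a remote representing measure $\lambda$; its support is a support set hence nonempty, and it is not $\{\phi\}$ (remoteness plus the argument in Lemma~\ref{lemma1}), so in particular $X\neq\{\phi\}$; then the conclusion $\textup{supp}\,\lambda=X$ follows exactly as in the proof of Lemma~\ref{lemma1} and Proposition~\ref{HS?} — indeed if $A$ were all of $C(X)$ there would be no nonzero annihilating measure, contradicting the construction of $f\lambda$ in Lemma~\ref{lemma1}, so $A$ is automatically proper in this situation. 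I expect this properness bookkeeping to be the only mild obstacle; the rest is a one-line deduction, and the proof should read simply: ``Apply Proposition~\ref{HS?} to each remote representing measure for $\phi$ and intersect.''
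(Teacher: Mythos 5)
Your proof is correct and is essentially the argument the paper intends (the paper treats the corollary as immediate from Proposition~\ref{HS?}): every remote representing measure for $\phi$ has support equal to $X$, so the intersection defining $\underline{\textup{supp}}\,\phi$ is $X$. Your care over the properness hypothesis is warranted and correctly resolved — one can note even more simply that if $A=C(X)$ then every character is evaluation at a point of $X$ with the point mass as its unique representing measure, so $\mathcal{D}(C(X))=\emptyset$ and the statement is vacuous in that case.
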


\begin{cor}\label{C:perfect} If $A$ is proper and pervasive in $C(X)$, 
then $X$ is perfect.
\end{cor}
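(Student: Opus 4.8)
The plan is to prove that $X$ has no isolated points; since $X$ is compact it is closed, so ``having no isolated points'' is exactly ``being perfect''. The one nontrivial ingredient is that a proper pervasive algebra is analytic on $X$, which is precisely the implication $(1)\Rightarrow(2)$ in Proposition \ref{HS2}.

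First I would dispose of the degenerate case. If $X$ consists of a single point, then $C(X)=\C$, and since $A$ contains the constants this forces $A=C(X)$, contradicting the hypothesis that $A$ is proper. Hence $X$ has at least two points.

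Now assume for contradiction that some $x_0\in X$ is isolated, so that $\{x_0\}$ is a nonempty open subset of $X$. Pick $x_1\in X$ with $x_1\neq x_0$. Because $A$ separates the points of $X$, there is $f\in A$ with $f(x_0)\neq f(x_1)$; set $g=f-f(x_0)\cdot 1$. Then $g\in A$ (as $A$ is an algebra containing the constants), $g$ vanishes on the nonempty open set $\{x_0\}$, and $g(x_1)=f(x_1)-f(x_0)\neq 0$, so $g$ is not identically zero. This contradicts the analyticity of $A$ supplied by Proposition \ref{HS2}. Therefore $X$ has no isolated point, i.e. $X$ is perfect.

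I do not anticipate any real obstacle here; the only point requiring care is to treat the one-point space separately, since that is precisely the case where properness is what fails and the point-separation argument has no room to operate. One could alternatively argue measure-theoretically, taking a remote representing measure and combining Proposition \ref{HS?} with the fact that a minimal support set is either a singleton or a perfect set, but verifying the existence of a suitable remote representing measure for a proper algebra is more delicate than simply invoking analyticity, so the argument above is the cleaner route.
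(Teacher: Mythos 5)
Your proof is correct, and it takes a genuinely different route from the paper's. The paper argues via representing measures: if some $\phi\in M(A)\setminus X$ exists, it has a minimal support set which is either a singleton or perfect, and Proposition~\ref{HS?} forces that set to be all of $X$; if no such $\phi$ exists, then $M(A)=X$, and the paper falls back on connectedness of $X$ (via antisymmetry and the Shilov idempotent theorem) together with properness to rule out isolated points. You instead go through the implication ``proper and pervasive $\Rightarrow$ analytic'' of Proposition~\ref{HS2}: an isolated point is an open singleton, and a point-separating element of $A$, translated to vanish there, would violate analyticity. Your argument is shorter, needs no case split on whether $M(A)\setminus X$ is empty, and avoids measures altogether; the paper's version has the advantage of staying within the support-set machinery it has just set up (Proposition~\ref{HS?} and the perfectness of minimal support sets), which is the theme of that subsection. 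Your handling of the one-point degenerate case is also correct and is the right place to use properness. No gaps.
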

\begin{proof}
If $\phi\in M(A)\setminus X$, then it has a non-singleton 
(and hence perfect) minimal support set, 
and by Proposition~\ref{HS?}, this must be $X$.
If there is no such $\phi$, then $M(A)=X$, so $X$ is connected
(since $A$ is pervasive) and has more than one point (since $A$ is proper).
Hence $X$ has no isolated points, and so is perfect.  
\end{proof}

\section{Maximality and Pervasiveness}
\label{maximal}

\subsection{Beginning Examples}\label{SS:basex}
The disk algebra is maximal and pervasive. The algebra
of all functions continuous on the union of two
disjoint disks and holomorphic on their interiors
is neither maximal nor pervasive.

Pervasiveness says that the algebra is very big, relative to $C(X)$, yet
it is easy to give examples of maximal algebras that
are not pervasive by using Proposition \ref{HS1}.
For instance, the algebra of all functions continuous
on the union of the closed unit disk and the segment
$[1,2]$, and holomorphic on the open disk, is maximal
and not essential, hence not pervasive.  That it is maximal
is Wermer's Maximality Theorem.  Of course, this
example is essentially pervasive.

In what follows, let $[A, u_\alpha, \alpha \in I]$ denote the 
closed subalgebra of $C(X)$ generated by $A$ and the 
collection of functions $\{u_\alpha\}$.  We have the following simple proposition.

\begin{prop}\label{closetomaximal}
If $A$ is a pervasive subalgebra of $C(X)$, then $[A, \overline{f}] = C(X)$ for all nonconstant $f \in A$. \end{prop}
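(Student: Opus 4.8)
The plan is to show that the algebra $B = [A,\overline f]$ contains enough functions that it cannot be a proper subalgebra of $C(X)$, and then invoke the characterisation (due to \v{C}erych, cited above) that $B$ is dense in $C(X)$ iff there is no nonzero measure on $X$ annihilating $B$. So suppose $\mu \perp B$; I want to conclude $\mu = 0$. Since $A \subseteq B$, the measure $\mu$ annihilates $A$, and since $\overline f \in B$ and $B$ is an algebra, $\overline f{}^n \in B$ for all $n$, hence $\overline f{}^n g \in B$ for every $g \in A$; thus $\int \overline f{}^n g \, d\mu = 0$ for all $n \ge 0$ and all $g \in A$. The idea is that this forces $\mu$ to be concentrated on the "fibres" of $f$, i.e. on level sets $\{f = c\}$, and on each such fibre the restriction of $A$ is dense in $C$ by pervasiveness, which kills $\mu$ there.

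The key step is to make the fibring precise. Let $Y = \{x \in X : f(x) = c\}$ for a fixed value $c$ in the range of $f$; since $f$ is nonconstant, $Y$ is a proper closed subset of $X$. I would argue that $\mu$, restricted to $Y$, annihilates $A|Y$: indeed, for $g \in A$ the function $h(t) = g$ can be combined with the fact that on $Y$ we have $\overline f \equiv \overline c$ constant, so the identity $\int \overline f{}^n g \, d\mu = 0$ together with a decomposition of $\mu$ over the level sets of $f$ should isolate the part on $Y$. Concretely, the standard device is: fix $g \in A$ and consider the complex measure $g\,d\mu$; its "moments" against all powers of $\overline f$ vanish. Pushing $g\,d\mu$ forward under the map $f : X \to \C$ gives a measure $\nu$ on the compact set $f(X) \subseteq \C$ with $\int \overline w{}^n \, d\nu(w) = 0$ for all $n$; but a measure on a planar compact set all of whose conjugate-power moments vanish need not itself vanish — however, combined with $\int w^k \overline w{}^n$-type relations obtained by replacing $g$ by $f^k g$, one gets that the pushforward of $g\,d\mu$ is annihilated by the polynomials in $w$ and $\overline w$, hence (Stone--Weierstrass on $f(X)$) is zero. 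Therefore $g\,d\mu$ gives zero mass to $f^{-1}(S)$ for every Borel $S \subseteq \C$, for every $g \in A$.

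From there I would localise: the conclusion $\int_{f^{-1}(S)} g\, d\mu = 0$ for all Borel $S$ and all $g \in A$ means that for $\mu$-a.e.\ fibre $Y_c = f^{-1}(c)$, the restriction $\mu|_{Y_c}$ annihilates $A|Y_c$. By pervasiveness, $A|Y_c$ is dense in $C(Y_c)$ whenever $Y_c$ is a proper closed subset of $X$ — which holds for every $c$ since $f$ is nonconstant — so $\mu|_{Y_c} = 0$ for $\mu$-a.e.\ $c$, whence $\mu = 0$. By \v{C}erych's criterion, $B = C(X)$.

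The main obstacle I expect is the fibre-disintegration argument: making rigorous the passage from "all conjugate-power moments of $g\,d\mu$ vanish" to "$g\,d\mu$ annihilates every continuous function that is constant on fibres of $f$," and then to a genuine disintegration $\mu = \int \mu_c \, d(f_*|\mu|)(c)$ with $\mu_c$ supported on $Y_c$ and annihilating $A|Y_c$. The clean way to avoid heavy measure-theoretic machinery is to argue directly: show that the Stone--Weierstrass closure of $\{$polynomials in $f,\overline f\} \cdot A$ contains, for each $c$, functions that approximate the indicator of a neighbourhood of $Y_c$ times an arbitrary $C(X)$ function supported near $Y_c$; pervasiveness then supplies the rest. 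This is where care is needed, but no single step is deep.
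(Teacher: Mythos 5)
Your strategy is genuinely different from the paper's, which disposes of the statement in two lines: $B=[A,\overline{f}]$ contains $A$ and hence is pervasive, and it contains the real-valued functions $|f|^2$, $\Re f$ and $\Im f$, at least one of which is nonconstant, so $B$ is not antisymmetric; since by Proposition~\ref{HS2} a proper pervasive algebra must be antisymmetric, $B=C(X)$. Your annihilating-measure argument is, in effect, an unpacking in this special case of the machinery behind that implication (it is close to the standard proof that annihilating measures are concentrated on sets of antisymmetry), and its first half is sound: from $\int f^k\overline{f}^{\,n}g\,d\mu=0$ for all $k,n\ge 0$, $g\in A$, together with Stone--Weierstrass on the compact set $f(X)\subseteq\C$, you correctly conclude that $f_*(g\,d\mu)=0$, i.e.\ $\int_{f^{-1}(S)}g\,d\mu=0$ for every Borel $S\subseteq\C$ and every $g\in A$.

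The weak point is exactly the one you flag: the fibre-by-fibre disintegration. For a general compact Hausdorff $X$ the space $C(X)$ need not be separable, so the a.e.\ quantifier must be made uniform over an uncountable family of $g$'s, and the disintegration theorem itself is not free in this generality. But you do not need any of it. From $\int_{f^{-1}(S)}g\,d\mu=0$ for all Borel $S$, take $S$ \emph{closed} with $Y=f^{-1}(S)$ a proper closed subset of $X$: then the restricted measure $\mu|_Y$ annihilates $A|Y$, which is dense in $C(Y)$ by pervasiveness, so $\mu|_Y=0$, i.e.\ $|\mu|(Y)=0$. Since $f$ is nonconstant, choose $a\neq b$ in $f(X)$ and cover $\C$ by the two closed sets $\{w:|w-a|\ge|w-b|\}$ and $\{w:|w-a|\le|w-b|\}$; their preimages are proper closed subsets of $X$ (the first misses the nonempty set $f^{-1}(a)$, the second misses $f^{-1}(b)$) whose union is $X$, so $|\mu|(X)=0$ and \v{C}erych's criterion gives $B=C(X)$. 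With that replacement your argument is complete and self-contained, though considerably longer than the paper's antisymmetry argument; its one advantage is that it does not pass through the chain of implications in Proposition~\ref{HS2}.
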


\begin{proof}
If $A = C(X)$, this is clear. So suppose $A$ is a proper pervasive subalgebra of $C(X)$. Let $f \in A$ be a nonconstant function and consider $B = [A, \overline{f}]$. Since $A \subseteq B$, we know that $B$ is pervasive. But $|f|^2\in B$,
so $B$ is not antisymmetric. By Proposition~\ref{HS2}, $B$ cannot
be  proper. \end{proof}

We note that this proposition does not say that $A$ is maximal. However there are situations in which $A$ must be maximal. We explore one such situation briefly, before turning to the main result in this section. We say a function $u \in A$ is {\em unimodular} if it is unimodular on $X$.

\begin{prop}

Let $A$ be a strongly logmodular proper subalgebra of $C(X)$. Then $A$ is pervasive on $X$ if and only if $A$ is essential and maximal. 

\end{prop}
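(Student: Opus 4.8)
The plan is to prove both implications, using Proposition~\ref{HS2} for the easy direction and exploiting strong logmodularity for the hard one. The backward implication is immediate: if $A$ is essential and maximal, then by the last sentence of Proposition~\ref{HS2} it is pervasive on $X$. So the content is the forward direction: assuming $A$ is strongly logmodular, proper and pervasive, we must show $A$ is essential and maximal. Essentiality is free, since proper and pervasive already gives essential by Proposition~\ref{HS2} (chain (1)$\Rightarrow$(5)). Thus everything reduces to showing maximality.

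To prove maximality, I would suppose $B$ is a closed algebra with $A \subseteq B \subseteq C(X)$ and $B \neq A$, and aim to show $B = C(X)$. Pick $g \in B \setminus A$. The idea is to manufacture, inside $B$, a nonconstant function together with its complex conjugate, which by Proposition~\ref{closetomaximal} (applied to the pervasive algebra $B$, since $A \subseteq B$) forces $B = C(X)$. The route to producing a conjugate is exactly where strong logmodularity enters: because $\log|A^{-1}| = C(X,\R)$ \emph{with equality}, every strictly positive continuous function on $X$ is literally $|f|$ for some invertible $f \in A$, equivalently $e^{u} = |f|$ with $u = \Re(\log f)$ realized on the nose. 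In particular, one can try to write $\overline{g}/\text{(something)}$ or $g \cdot \overline{g} = |g|^2$ and then recognize $|g|^2$, or an appropriate modification, as already lying in $A \subseteq B$; combined with $g \in B$ this would put $\overline{g}$ in $B$ as long as $g$ is nonvanishing, after which Proposition~\ref{closetomaximal} finishes. For the vanishing case, a standard trick is to replace $g$ by $g + c$ for a suitable constant, or to localize, using that $A$ is antisymmetric (from Proposition~\ref{HS2}) and hence $A$ separates points enough to normalize.

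The cleanest line I expect to work: since $A$ is strongly logmodular, for the positive function $1 + |g|^2 \in C(X,\R)$ choose $h \in A^{-1}$ with $|h| = 1 + |g|^2$ on $X$; then $h + \overline{h} = 2\Re h$ is not obviously in $B$, so instead consider that $|g + \lambda|^2 = |g|^2 + 2\Re(\bar\lambda g) + |\lambda|^2$ for scalars $\lambda$, and vary $\lambda$ to extract $\Re g$ and $\Im g$ once we know all these moduli lie in $|B^{-1}|$ — but moduli alone are real functions, not obviously in $B$. The genuinely load-bearing step, and the main obstacle, is converting modular information (which is about $|B^{-1}|$ or about real parts) into membership of an actual conjugate function $\overline{g}$ in $B$; I anticipate this is handled by the observation that $B$, being a uniform algebra on $X$ containing the strongly logmodular $A$, inherits enough structure that a function in $B$ whose modulus is in $|A^{-1}|$ differs from an element of $A^{-1}$ by a unimodular factor, and one then argues that this unimodular factor, hence $\overline{g}$, must lie in $B$. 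Once any nonconstant $g$ has $\overline{g} \in B$, Proposition~\ref{closetomaximal} gives $B = C(X)$, establishing maximality and completing the proof.
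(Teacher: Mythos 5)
Your overall strategy is the paper's: dispatch the backward implication and essentiality via Proposition~\ref{HS2}, and reduce maximality to exhibiting, inside any intermediate algebra $B\supsetneq A$, a nonconstant function together with its conjugate. But the load-bearing step is exactly the one you leave as an anticipation, and the concrete attempts you sketch do not work. Strong logmodularity does \emph{not} put $|g|^2$ (or any positive real function) into $A$; it only says $|g|^2=|h|$ for some $h\in A^{-1}$, and as you yourself note, knowing a modulus lies in $|A^{-1}|$ gives no membership of a conjugate in $B$. Likewise the $|g+\lambda|^2$ polarization idea stalls for the same reason. So as written the forward direction is not proved.

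The missing mechanism is short. Given $g\in B\setminus A$, pick $M>\|g\|_X$; then $|\hat g|\le\|g\|_X<M$ on $M(B)$, so $g+M\in B^{-1}$. Since $\log|g+M|\in C(X,\R)=\log|A^{-1}|$, there is $h\in A^{-1}$ with $|h|=|g+M|^{-1}$ \emph{exactly} (this is where ``strongly'' is used). Set $u=h(g+M)$. Then $u\in B^{-1}$ and $|u|\equiv 1$ on $X$, so $\overline{u}=u^{-1}\in B$ --- invertibility, not any structural inheritance, is what puts the conjugate of the \emph{unimodular factor} into $B$; note you do not get $\overline{g}\in B$, nor do you need it. Finally $u\notin A$ (else $g=h^{-1}u-M\in A$), so $u$ is nonconstant, and $\Re u,\Im u\in B$ are real-valued with at least one nonconstant; hence $B$ is not antisymmetric, and since $B\supseteq A$ is pervasive, Proposition~\ref{HS2} forces $B=C(X)$ (equivalently, apply Proposition~\ref{closetomaximal} to $u\in B$). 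Your instinct that ``$g$ differs from an element of $A^{-1}$ by a unimodular factor whose conjugate lies in $B$'' is the right one, but the normalization $g\mapsto g+M$ and the identity $\overline{u}=u^{-1}$ for unimodular invertibles are the two concrete facts that turn it into a proof.
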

\begin{proof}
First note that if $A$ is essential and maximal, then $A$ is pervasive on $X$ by Proposition~\ref{HS2}. So suppose now that $A$ is pervasive. 

Choose $f \in C(X) \setminus A$. Consider the algebra $B = [A, f]$. There exists a constant $M$ such that $f + M$ is invertible in $B$. Therefore, we may choose $g \in A^{-1}$ such that $|g^{-1}| = |f + M|$. Let $u = g(f + M)$. Then $u$ is unimodular and $u$ is invertible in $B$. Therefore $\overline{u} = u^{-1} \in B$.  Note that $u \notin A$, for otherwise we would have $f \in A$. Since $A \subset B \subseteq C(X)$, we know that $B$ is pervasive. If it were proper, it would be antisymmetric. Therefore, $B = C(X)$ and $A$ is maximal.
\end{proof}

\subsection{Non-maximal pervasive algebras}\label{SS:gem}

It is not quite
so obvious how to give an example of a non-maximal proper
pervasive algebra. De Paepe and Wiegerinck \cite{dePaepe} 
(see also \cite{HS})
gave several constructions, the simplest of which
is the algebra 
$$ \{ f\in A(\mathbb{D}): f(0)=f(1) \}.$$
We formulate a result that justifies a general
construction, and uses an elaboration of their method.

\begin{thm} Let 
$A$ be a proper pervasive algebra on $X$ containing a nonconstant 
unimodular function. Then there is an infinite descending chain
$$A \supset A_1 \supset A_2 \cdots \supset A_n \supset \cdots$$
of distinct uniform algebras, contained in $A$, each one
pervasive on $X$.
\end{thm}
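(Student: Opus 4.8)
The idea is to mimic the De Paepe--Wiegerinck construction using the given nonconstant unimodular function $u\in A$. First I would fix a point $p\in X$ and a value $w\in\S^1$ in the range of $u$ (such a $w$ exists since $u$ is nonconstant and $X$ is perfect by Corollary~\ref{C:perfect}), and use the unimodularity of $u$ to produce an infinite sequence of distinct points $\{x_n\}\subset X$ together with a fixed reference point $x_0$. Concretely, since $u$ is continuous, nonconstant and unimodular, its range is a nontrivial subarc (or all) of $\S^1$; pick distinct points $x_0,x_1,x_2,\dots$ in $X$ on which $u$ takes distinct values $u(x_n)$, and one can even arrange (if needed for later steps) that these points are isolated from each other in the sense that there are disjoint neighborhoods $N_n$. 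Then define
\begin{equation*}
A_n = \{\, f\in A : f(x_0)=f(x_1)=\cdots=f(x_n)\,\}.
\end{equation*}
Each $A_n$ is clearly a closed subalgebra of $C(X)$ containing the constants, and $A\supset A_1\supset A_2\supset\cdots$. Two things must be checked: that the containments are strict, and that each $A_n$ separates points of $X$ and is pervasive on $X$.

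For strictness: I would show $A_{n-1}\ne A_n$ by exhibiting $f\in A$ with $f(x_0)=\cdots=f(x_{n-1})$ but $f(x_n)$ different. A natural candidate is a polynomial in $u$: since $u(x_0),\dots,u(x_n)$ are distinct points of $\S^1$, Lagrange interpolation gives a polynomial $P$ with $P(u(x_j))=0$ for $j<n$ and $P(u(x_n))=1$; then $f=P\circ u\in A$ does the job. (If it is more convenient, products of factors $u-u(x_j)$ work equally well.) For point separation of $A_n$: this is where pervasiveness of $A$ enters. Given distinct $y,z\in X$, I want $f\in A_n$ with $f(y)\ne f(z)$. If $\{y,z\}$ avoids at least one of the $x_j$, say $x_k\notin\{y,z\}$, then since $A$ is pervasive the restriction $A|(\{x_0,\dots,x_n\}\cup\{y,z\}\setminus\{x_k\})$ — no, more carefully: use that $A|Y$ is dense in $C(Y)$ for the finite (hence proper, as $X$ is perfect and infinite) set $Y=\{x_0,\dots,x_n,y,z\}$, so one can find $f\in A$ approximating any prescribed values on $Y$; in particular approximate the function that is $0$ on all $x_j$ and on $y$ and is $1$ on $z$. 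This gives $f\in A$ with all $f(x_j)$ near $0$ and $f(y)$ near $0$, $f(z)$ near $1$; but to land exactly in $A_n$ one must correct $f$ to make the values at the $x_j$ exactly equal. This correction is the delicate point (see below). Finally, pervasiveness of $A_n$ on $X$: for a proper compact $Y\subsetneq X$, I must show $A_n|Y$ is dense in $C(Y)$. Since $A|Y$ is already dense (as $A$ is pervasive), it suffices to show that adding the finitely many linear constraints $f(x_0)=\cdots=f(x_n)$ does not shrink the closure of the restriction — intuitively, because $Y$ is a proper subset there is ``room'' to modify $f$ near the $x_j$ that lie in $Y$ without disturbing its values elsewhere on $Y$, using again density of $A$ on slightly larger finite or compact sets, together with a partition-of-unity / summing argument against the fact that $A$ is an algebra.

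\textbf{Main obstacle.} The crux is the ``exact correction'' step: pervasiveness of $A$ only gives approximation of prescribed values on finite sets, not exact interpolation, so after choosing $f\in A$ with $f(x_j)\approx c$ for all $j$ I need to add a small $g\in A$ that vanishes suitably on $Y$ (or on the relevant points of $Y$) while adjusting the values $f(x_j)$ to a common value exactly. The clean way is to use the unimodular function $u$ to build, for each pair $x_i,x_j$, an element of $A$ vanishing at one point of a prescribed large finite set and nonzero at another, then solve a finite linear system to kill the discrepancies $f(x_j)-f(x_0)$ exactly; one must check the relevant Vandermonde-type determinant (in the distinct values $u(x_j)$) is nonzero, which it is. Keeping the correction terms small so that pervasiveness-type estimates on the proper set $Y$ survive, and simultaneously exact on the finite constraint set, is the technical heart; everything else is bookkeeping with Bishop/Hoffman--Singer (Proposition~\ref{HS2}) not even needed here, since pervasiveness is proved directly rather than via antisymmetry.
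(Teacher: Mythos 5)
Your construction fails at the pervasiveness step, and the failure is structural rather than technical: you place all of the constraint points $x_0,\dots,x_n$ \emph{in $X$ itself}. If $Y$ is any proper closed subset of $X$ containing two of them, say $x_0$ and $x_1$ (such a $Y$ exists: $X$ is perfect by Corollary~\ref{C:perfect}, hence infinite, so delete a small open set away from the $x_j$), then the measure $\delta_{x_0}-\delta_{x_1}$ on $Y$ annihilates $A_n|Y$. Hence $A_n|Y$ lies in the proper closed subspace $\{g\in C(Y):g(x_0)=g(x_1)\}$ and can never be dense in $C(Y)$, no matter how cleverly you perform the ``exact correction'' you identify as the crux. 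In other words, the linear constraints $f(x_0)=\cdots=f(x_n)$ are visible on proper closed subsets of $X$, so no algebra defined by them can be pervasive. (Compare the De Paepe--Wiegerinck example $\{f\in A(\mathbb{D}): f(0)=f(1)\}$: exactly one of the two constraint points lies on the Shilov boundary; the other is interior.)

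The paper's proof avoids this by putting the constraints at characters $\phi_1,\dots,\phi_n\in M(A)\setminus X$, setting $A_k=\{f\in A:\phi_j(f)=f(x),\ 1\le j\le k\}$ for a single fixed $x\in X$. Such characters are evaluated by measures supported on all of $X$ (Proposition~\ref{HS?}), so the constraint is not a functional on $f|Y$ and density on proper $Y$ is not obstructed. The nonconstant unimodular function $u$ is what makes this possible: it forces $M(A)\ne X$ (else $\bar u=u^{-1}\in A$, contradicting antisymmetry), guaranteeing that such $\phi_j$ exist, and a Blaschke-product composition $u_n=B_n\circ u$ gives a unimodular element of $A$ with $\phi_j(u_n)=0$ exactly. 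That element also solves the ``exact correction'' problem you flag: given $k\in A$ approximating $f-f(x)$ on $F$, the function $K=(k-k(x))hu_n$ (with $h\in A$ approximating $\overline{u_n}$ on $F$, available by pervasiveness) lies exactly in $A_n$ while $hu_n\approx 1$ on $F$ preserves the approximation. So no Vandermonde system or exact finite interpolation is needed. You would need to redesign your construction from the start along these lines; as written, every $A_n$ you define (other than $A$ itself) is non-pervasive.
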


\begin{proof}
Let $A$ be a proper pervasive algebra containing a nonconstant 
unimodular function $u$. Then, if $M(A) = X$, we 
would have $|u| = 1$ on $M(A)$. 
As a consequence $u$ would be invertible. 
Thus, we would have $\overline{u} = u^{-1} \in A$. 
But $A$ is antisymmetric, so this is impossible.

Therefore, since $M(A)$ is connected,
there exist distinct characters
$\phi_j \in M(A) \setminus X$ ($j\in\N$). By Proposition \ref{HS?},
each $\phi_j$ is represented by a
measure having support equal to $X$. 
So $|\phi_j(u)| < 1$, for otherwise $u$ would be constant. 
Let $a_j=\phi_j(u)$, and for $n\in\N$ let $B_n$ be the finite
Blaschke product with zeros $a_1$,$\ldots$,$a_n$.
Replacing, if need be,
$u$ by $B_n\circ u$, and taking the appropriate product we obtain $u_n \in A$ unimodular with
$\phi_j(u_n) = 0$ for each $j = 1, \ldots, n$.

Fix any $x \in X$. We know, by Corollary \ref{C:perfect},  
that $x$ is not isolated in $X$.  For each
$k\in\N$,  let 
$$A_k = \{f \in A: \phi_j(f) = \phi(x),
\hbox{ for }1\le j\le k\}.$$ 

Then each $A_k$ 
is closed and contains the constants, and the algebras $A_k$ form
an increasing chain.

For any uniform algebra $A$,
and any finite subset $F$ of $M(A)$, the restriction $A|F$
coincides with the algebra $\C^F$ of all complex-valued functions
on $F$. Thus for each $k$ with $1\le k<n$,
there is a function $f\in A$ that vanishes at $\phi_j$
for $j\le k$, but not at $\phi_{k+1}$,
so the $A_k$ are all distinct.

Now we claim that $A_n|F$ is dense in $C(F)$, whenever
$F$ is a proper closed subset of $X$. Note that this will also imply that $A_n$ separates the points of $X$ and therefore $A_n$ is a uniform algebra on $X$.

So suppose that $F = X \setminus U$ for some nonempty
open set $U$.  Let $f \in C(F)$ and let $\varepsilon > 0$. 
Suppose that $\varepsilon < 1/2$ and $\|f\| \le 1$. 

Case $1^\circ$. If $x \in F$, consider $f_1 = f - f(x)$. There exists $k \in A$ such that $\|k - f_1\|_F < \varepsilon/2$. Thus $|k(x)| < \varepsilon/2$. So $$\|(k - k(x)) - f_1\|_F \le \|k - f_1\|_F + |k(x)| < \varepsilon.$$ Note that $\overline{u_n} \in C(X)$ and therefore there exists $h \in A$ such that $\|h - \overline{u_n}\|_F = \|u_n h - 1\|_F < \varepsilon/2$. Now, $K = (k - k(x)) h u_n \in A_n$, since $K(x) = 0$ and $\phi_j(u_n) = 0$ for $j=1, \ldots, n$. Further,
$$\|K - f_1\|_F \le \|(k - k(x)) h u_n - (k - k(x))\|_F + \|(k - k(x)) - f_1\|_F.$$ But $$\|k - k(x)\|_F \le \|k\|_F + \varepsilon/2 \le \|f_1\| + \varepsilon \le 2 + \varepsilon.$$ So

$$\|K - f_1\|_F \le \|k - k(x)\|_F \cdot \varepsilon/2 + \varepsilon <  3 \varepsilon.$$
Thus, $K + f(x) \in A_n$ and $\|K + f(x) - f\|_F < 3 \varepsilon$.

Case $2^\circ$. If $x \notin F$, consider the set $F \cup \{x\}$. Then $F \cup \{x\} \ne X$, because points are not open (by Corollary \ref{C:perfect}), and $F \cup \{x\}$ is a closed set containing $x$. Thus, the previous case applies and we conclude that $A_n$ is dense in $C(F)$.

Thus $A_n$ is a pervasive algebra on $X$.
\end{proof}

If we start, for instance, with $A=A(\mathbb{D})$,
then the intersection of any infinite chain of the type
constructed in the proof might not separate 
points on $X=\S^1$, and if it did, might not be pervasive on $X$.  
One might wonder whether one could
find an infinite  descending chain with a pervasive intersection,
or an infinite ascending chain, and so on.
The next example answers all such questions.

Before leaving this theorem, we note that as one goes down
the chain of $A_j$'s in this example, the first homotopy
group $\pi_1(M(A_j))$ becomes more complex.  This might
suggest that, on a given $X$, maximal algebras 
$A$ have simplest $\pi_1(M(A))$.  However, see below.

\subsection{A Large Family of Pervasive Subalgebras}\label{longcomplicated}

We now show how to imbed the lattice of all subsets of $\N$ in
the family of pervasive subalgebras of some $C(X)$.

Our example has the additional property that all
the subalgebras are Dirichlet, 
and the least algebra in the family is generated
(as a function algebra) by one element. 
The construction depends on the following \cite[Theorem 4.1]{NOS}.

\begin{thm}\label{T:NOS} Suppose $U$ is a proper open subset of $\hat{\mathbb{C}}$ 
such that
for each boundary point $a$ of $U$ there exists some
$f\in A(U)$ with an essential singularity at $a$.
Then $A(U)$ is pervasive on the boundary $X$ of $U$ if and only if 
each connected component $U_j$ of $U$ has $X$ for its boundary.
\end{thm}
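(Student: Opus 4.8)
\textbf{Proof proposal for Theorem~\ref{T:NOS}.}

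The plan is to prove the two implications separately, using Corollary~\ref{C:perfect} and the characterization of density of $A(U)|Y$ in terms of annihilating measures (the \v{C}erych fact quoted just before Lemma~\ref{lemma1}). For the easy direction, suppose some component $U_j$ has boundary $Y_j \subsetneq X$. Then I would exhibit a proper closed subset of $X$ on which $A(U)$ is not dense: the closure of $Y_j$ is a proper closed subset of $X$ (it is proper by hypothesis, and closed since $U_j$ is a component of the open set $U$). On $\overline{Y_j}$ the algebra $A(U)$ restricts to functions whose behaviour is governed by $U_j$; since $U_j$ carries its own nontrivial analytic structure (it is a proper open subset of $\hat{\mathbb C}$ by the standing hypothesis that every boundary point of $U$ supports an essential singularity, so in particular $U_j\neq\hat{\mathbb C}$), the algebra $A(U_j)$ on $\overline{Y_j}$ is proper; I would produce a nonzero annihilating measure supported on $Y_j$ — for instance, a suitable contour integral against an arc of $\partial U_j$, exactly as in the proof of Lemma~\ref{lemma1} applied to $A(U_j)$ — and observe it annihilates all of $A(U)$. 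Hence $A(U)|\overline{Y_j}$ is not dense, so $A(U)$ is not pervasive on $X$.

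For the main direction, assume every component $U_j$ of $U$ has $\partial U_j = X$, and let $F = X\setminus V$ be a proper closed subset of $X$, with $V\neq\emptyset$ open in $X$. I want to show $A(U)|F$ is dense in $C(F)$, equivalently that no nonzero measure $\mu$ on $F$ annihilates $A(U)$. Fix such a $\mu$ with $\mu\perp A(U)$. The idea is to feed $\mu$ into the known pervasiveness of each $A(U_j)$: since each $U_j$ has full boundary $X$, and each boundary point of $U_j$ (being a boundary point of $U$) supports a function in $A(U)\subseteq A(U_j)$ with an essential singularity there, the hypotheses of the \emph{connected} case (already available, since $A(U_j)$ is pervasive by \cite[Theorem 3.2]{NOS}, or directly from the connected instance of this very theorem) apply to $U_j$. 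So $A(U_j)|F$ is dense in $C(F)$. The work is to upgrade "$A(U_j)|F$ dense for each $j$'' to "$A(U)|F$ dense''. I would do this via the Cauchy transform: for $\mu$ supported on $F\subseteq X = \partial U_j$, the Cauchy transform $\hat\mu(z) = \int \frac{d\mu(\zeta)}{\zeta - z}$ is holomorphic off $F$, and $\mu\perp A(U_j)$ forces $\hat\mu$ to vanish on $U_j$ (this is the standard duality computation, valid because rational functions with poles off $\overline{U_j}$ lie in $A(U_j)$ after adjusting — here one uses that $\hat{\mathbb C}\setminus \overline{U_j}$ is relevant, and a limiting argument near the essential singularities). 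Since this holds for every component $U_j$, and $U = \bigcup_j U_j$, the transform $\hat\mu$ vanishes on all of $U$. But $V$ is a nonempty open subset of $X = \partial U$, so $V$ meets $\overline U$ in points arbitrarily close to $U$; since $\hat\mu$ is holomorphic on $\hat{\mathbb C}\setminus F$ and vanishes on $U$, which accumulates at every point of $V\subseteq X\setminus F$, $\hat\mu\equiv 0$ on the component of $\hat{\mathbb C}\setminus F$ containing (points near) $V$ — and since $F$ is a proper closed subset of the perfect set $X$ (perfect by Corollary~\ref{C:perfect} applied to the proper pervasive $A(U_j)$), $\hat{\mathbb C}\setminus F$ is connected, so $\hat\mu\equiv 0$ everywhere, whence $\mu = 0$.

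The main obstacle, I expect, is the step that turns $\mu\perp A(U)$ into the vanishing of $\hat\mu$ on each individual component $U_j$: a priori $A(U)$ is a \emph{smaller} algebra than $A(U_j)$ (its functions must extend continuously across $\partial U\setminus \partial U_j$ as well), so one cannot directly say $\mu\perp A(U_j)$. The resolution is that the rational functions with poles off $\overline{U}$ — which suffice to compute the Cauchy transform and force $\hat\mu = 0$ off $\overline U$ — all lie in $A(U)$, and then a separate argument is needed to push the vanishing of $\hat\mu$ from $\hat{\mathbb C}\setminus\overline U$ into the "holes'' that are the other components $U_i$, $i\neq j$; this is where the essential-singularity hypothesis does real work, allowing one to approximate, near $\partial U$, functions that separate the components, so that the annihilation condition localizes. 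One should be careful that $\overline U$ need not equal $\hat{\mathbb C}$ and that the components $U_j$ may be infinite in number with complicated accumulation, so the connectivity claim about $\hat{\mathbb C}\setminus F$ (used twice) must be justified from $X$ being the boundary of each $U_j$ together with $F\subsetneq X$, rather than assumed.
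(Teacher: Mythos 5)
First, a point of order: the paper does not prove this statement — it is quoted from \cite[Theorem 4.1]{NOS} as an ingredient for the construction in Subsection~\ref{longcomplicated} — so there is no internal proof to compare against, and I am assessing your sketch on its own terms. Your easy direction is sound in outline, though the right tool is harmonic measure rather than ``a contour integral against an arc'' ($\partial U_j$ need not contain any arc): evaluation at a point $a\in U_j$ is a character of $A(U)$ remotely represented by the harmonic measure $\omega_a^{U_j}$, whose support lies in $\partial U_j\subsetneq X$, and Lemma~\ref{lemma1} then applies once one checks (using the essential-singularity hypothesis) that $A(U)$ is nonconstant on $\overline{U_j}$.

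The main direction has a fatal gap at its final step. From $\hat\mu\equiv 0$ on $\hat{\mathbb C}\setminus F$ you cannot conclude $\mu=0$: the Cauchy transform determines $\mu$ only through its values area-almost-everywhere, and here $F$ will in general have positive area. Indeed, when $U$ is dense in $\hat{\mathbb C}$ (the Wada situation this theorem is used for), the essential-singularity hypothesis forces $X$ to have positive area near every point — if $X$ had zero area it would be removable for continuous functions holomorphic off it, and $A(U)$ would reduce to the constants — and for any compact $F$ of positive area there exist nonzero measures on $F$ whose Cauchy transforms vanish identically off $F$ (annihilating measures of $R(F)$; this is the Swiss-cheese phenomenon). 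So even granting everything before it, the last inference fails. The earlier step asserting that $\mu\perp A(U)$ forces $\hat\mu=0$ on each $U_j$ is also unsupported: for $a\in U_j$ the kernel $1/(z-a)$ has a pole inside $U$ and is not in $A(U)$, and $\mu\perp A(U)$ does not yield $\mu\perp A(U_j)$ (you flag this yourself but leave it unresolved); the mechanism fails already for $A(\mathbb D)$. The route that actually works — and the one place the essential-singularity hypothesis genuinely enters — is different: since $A(U)$ is an algebra, one gets $\widehat{f\mu}(z)=f(z)\,\hat\mu(z)$ for $z\in U$ and $f\in A(U)$, because $\zeta\mapsto(f(\zeta)-f(z))/(\zeta-z)$ lies in $A(U)$; choosing $f$ with an essential singularity at a point $a\in X\setminus F$, where $\hat\mu$ and $\widehat{f\mu}$ are both holomorphic on a full disc about $a$, one argues that $\hat\mu$ must vanish near $a$, since otherwise $f=\widehat{f\mu}/\hat\mu$ would extend meromorphically across $a$; the hypothesis that \emph{every} component has boundary $X$ is what lets this local vanishing propagate. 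Your sketch never deploys the hypothesis concretely, and without that step and a replacement for the broken concluding inference the argument does not close; the actual proof is in \cite{NOS}.
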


To construct the example, take $U_0$, $U_1$, $U_2$, $\ldots$
to be a countably-infinite collection of pairwise-disjoint 
simply-connected open
subsets of the sphere $\hat{\mathbb{C}}$, all sharing the same boundary $X$. 
That such a collection exists was first observed by
Brouwer \cite[p. 427]{Brouwer}. In 1917, Yoneyama \cite{Yon} gave
a nice way to describe an example of three $U_j$
that share a common boundary. His construction is known as
the \lq\lq Isles of Wada", and may be found in 
Krieger \cite[pp. 7-8]{Krieger} or on the web \cite{Web}.   
The $U_j$ are the \lq\lq ocean", a \lq\lq cold lake" and
a \lq\lq warm lake". It is easy to modify it so that there are 
infinitely-many $U_j$:
just have a separate lake with each temperature $(1/n)^\circ$C, for $n\in\N$.

Then, for each $S\subset\N$,
 let $A_S$  be the
algebra of those functions on $X$ that extend analytically across
each $U_j$ with $j\in \N\setminus S$.  It is easy to see that
$A_S=A_T$ implies $S=T$, and that 
$S\subset T$ implies $A_S\subset A_T$. Then $A_{\N}=C(X)$,
and $A_{\emptyset}$ is the intersection
of all the $A_n$, in other words, $A(\hat{\mathbb C}\sim U_0)$. 
Thus $A_\emptyset$ is Dirichlet (by the Walsh-Lebesgue Theorem \cite[Theorem II.3.3, p. 36]{Ga}),
and pervasive on $X$ (by Theorem \ref{T:NOS}), 
so that each $A_S$ is also Dirichlet and pervasive on $X$.

The maximal $A_S$'s  are those for which $\N\setminus S$
is a singleton, i.e. $A_S$ consists of all the
functions in $C(X)$ that extend holomorphically across
a single component $U_j$.  In other words, they
are the ones that have just a single nontrivial
Gleason part.  For each of these algebras, the
maximal ideal space has an infinitely-generated
first homotopy group $\pi_1(M(A))$, and the homotopy
gets {\em simpler} as we go down the lattice,
away from the maximal elements. This contrasts
with the previous example.  It suggests that
the maximal algebras might be distinguished
among the pervasive by the {\em difference}
(appropriately measured) between the topology
of $M(A)$ and that of $X$. See below.  

For general $S\subseteq\N$, the nontrivial parts of
$A_S$ are the $U_j$, for $j\in \N\setminus S$. So
a pervasive algebra may have many nontrivial parts.
In the $A_S$, the parts are all simply-connected,
but the result of Gamelin and Rossi already showed that
this is not necessary for maximality, since it applies,
for instance, to $A(U)$, where $U$ is an annulus.

\section{Examples}\label{basicexamples}

In this section, we present examples to demonstrate the relationships between various properties and pervasiveness. The disk algebra was the first example of a pervasive Dirichlet algebra. We now present some interesting Dirichlet algebras and discuss their pervasiveness (or lack thereof).

\subsection{Further Dirichlet Examples}
We give an example of an essential
Dirichlet algebra that is not pervasive.

 Recall that for a compact subset $K$ of the complex plane, $P(K)$ denotes the functions in $C(K)$ that can be uniformly approximated by polynomials in $z$ on $K$.

\begin{exam} Take three disjoint closed disks, $D_j \, \, (j=1,2,3)$,
with bounding circles $S_j$, 
and $A=P(D_1\cup D_2\cup D_3)$ on $X=S_1\cup S_2\cup S_3$,
$B=A+C(S_3)$. Then $A$ is Dirichlet and essential on $X$, $B$ is strictly between
$A$ and $C(X)$, but $B$ is not pervasive.
\end{exam}
\begin{proof}
That $A$ is Dirichlet is a case of the Walsh-Lebesgue Theorem (see \cite[p. 36]{Ga}).
Since $M(B)=X\cup D_1\cup D_2$ is not connected, $B$ is not pervasive. 
\end{proof}

The following example, of a pervasive, non-maximal
Dirichlet algebra having two nontrivial Gleason parts, 
is simpler to visualize than any of
those of Subsection \ref{longcomplicated}. 

\begin{exam} Let $X$ be a simple closed
Jordan curve which has positive area density at
each of its points.  
Then $A = A(\hat{\mathbb{C}}\sim X)$ is pervasive
on $X$, and $\mathcal D(A)$ consists of two Gleason
parts, namely the two sides of $X$. \end{exam}
\begin{proof}
The area density condition guarantees that for each
point $a\in X$, there is an element $f\in A$ having an essential
singularity at $a$, so by Theorem 4.1 of \cite{NOS}, $A$
is pervasive on $X$. 
To see that the two connected components $U_1$ and $U_2$ of 
the complement of $X$ belong to different
parts of $A$, it suffices to note that the characteristic
function $\chi_{U_1}$ of $U_1$ may be approximated, pointwise
on $U=U_1\cup U_2$, by elements of the unit ball of $A$.
This follows from work of Gamelin and Garnett:
The capacitary condition for pointwise bounded density
of $A(U)$ in $H^\infty(U)$ given in \cite{GG1}
shows that there is a bounded sequence belonging to
$A$ that approximates $\chi_{U_1}$ pointwise on $U$,
and the reduction of norm theorem \cite{GG2}
tells us that the sequence may be chosen with
sup norm bounded by $1$.   
\end{proof}

It is possible to show that $X$ may be replaced, in this
example, by any simple closed Jordan curve having no tangents, such
as the fractal snowflake.  For the essential ideas behind this
remark, see below, in Subsection \ref{SS:Warc}

\subsection{Douglas Algebras}\label{SS:Douglas} 
It is well known \cite{Garnett} that $H^\infty$ is a strongly logmodular 
subalgebra of $L^\infty$. Thus
every element of $M(H^\infty)$ has a unique representing
measure for $H^\infty$, supported on $X$.
It follows that if 
$H^\infty \subseteq B \subset L^\infty$, 
where $B$ is a closed subalgebra of $L^\infty$, 
then every element of $M(B)$ has a unique representing
measure on $X$.  We may regard $M(B)$ as a subset
of $M(H^\infty)$, by identifying
each element $\phi\in M(B)$ with the
element of $M(H^\infty)$ represented by the same
measure on $X$.   After this identification,
the Shilov boundary of $B$ is $X$.

Hoffman and Singer showed \cite[Thm 4.3, p. 222]{HS} that each 
proper pervasive algebra on a disconnected space is contained 
in a maximal algebra.  
They also showed \cite[Theorem 7.3]{HS} that $H^\infty$ is contained 
in no maximal subalgebra of $L^\infty$. Later, Sundberg \cite{Su} gave several different proofs of this fact.   Putting these facts together,
we get: 

\begin{exam} No proper closed subalgebra $B$ of 
$L^\infty$ containing $H^\infty$ can be pervasive 
on $X = M(L^\infty)$. \end{exam}

We give a direct proof that uses a little less machinery:

\begin{proof} 
Suppose that 
$H^\infty \subset B \subset L^\infty$. 
Then $\overline{z} \in B$, \cite[p. 378]{Garnett}. 
As a consequence, both $z$ and $\overline{z}$ belong to $B$, so 
$B$ cannot be antisymmetric. By Proposition~\ref{HS2}, 
$B$ cannot be pervasive on $X$.

For the case $B=H^\infty$, we may note that if $H^\infty$ were pervasive on $X$, then every Douglas algebra would be as well. 
\end{proof}

The case of $H^\infty$ can also be proved directly as follows: recall that no infinite Blaschke product is invertible in $H^\infty$. Furthermore, every infinite Blaschke product must have a zero in $M(H^\infty + C) = M(H^\infty) \setminus \mathbb{D}$. Choose a Blaschke product  $b$ with a discontinuity at $z = 1$.  Since the zeroes of $b$ cluster at $1$, we may choose $\phi \in M(H^\infty)$  with $\phi(z) = 1$ and $\phi(b) = 0$. Now $\phi$ cannot have 
$X$ as support set, for then $|\phi(z)|< 1$. If $H^\infty$ were pervasive, Proposition~\ref{HS?} would imply that $\phi \in M(L^\infty)$. But then 
$1 = |\phi (1)| = |\phi(b \overline{b})| = |\phi(b)|^2$, contradicting the fact that $\phi(b) = 0$.

\medskip
Since $X = M(L^\infty)$ is totally disconnected, one might suspect that it is impossible to have a  proper pervasive subalgebra of $C(X)$ when $X$ is totally disconnected. However, an example of this kind is
implicit in \cite[pp. 222-3]{HS}.  If $X\subset\C$ is a compact set
(such as the product $C\times C$, where $C$ is
a linear Cantor set of positive length) having positive
area in each neighbourhood of each of its points, 
and $U=\hat\C\setminus X$, then $A(U)$ is pervasive on $X$.

\subsection{Between $A(\mathbb{D})$ and $H^\infty(\mathbb{D})$}
A less well-known example of a pervasive algebra is the following.

\begin{exam} The algebra $QA$ is pervasive on $M(QC)$. \end{exam}

\begin{proof} Since $QA$ is a maximal subalgebra of QC (\cite{Wolff}) and analytic, the example follows from Proposition~\ref{HS1}. 
\end{proof}

This example has just one nontrivial part, $\mathbb{D}$. 

These results should be compared with the $CA_B$ algebras: let $B$ be a Douglas algebra properly containing $H^\infty + C$. Let $C_B$ denote the algebra generated by the unimodular functions invertible in $B$.  Then 
$B = H^\infty + C_B$, \cite{ChangMarshall2}. Furthermore, if $CA_B = C_B \cap H^\infty$, 
then the Shilov boundary of $CA_B$ is
naturally identified with $M(C_B)$, 
and if $D$ is a closed algebra with $CA_B \subset D \subset C_B$, 
then $D$ contains a nonconstant 
unimodular function invertible in $D$. In fact,
$D$ is generated over $CA_B$ by such unimodular functions. (See \cite{ChangMarshall2} for more information about these algebras.)
Therefore, $D$ is not antisymmetric and consequently no such $D$ can be pervasive. As a consequence, we may state:

\begin{lem} Let $B$ be a Douglas algebra
properly containing $H^\infty+C$. Then $CA_B$ is pervasive if
and only if it is maximal in $C_B$.
\end{lem}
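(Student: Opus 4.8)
The plan is to read off the lemma directly from the structural facts about $CA_B$ that have just been recalled, together with Proposition~\ref{HS1}. One direction is immediate: if $CA_B$ is maximal in $C_B$, then since it is analytic (it sits inside $H^\infty$, and no nonzero bounded analytic function vanishes on a set of positive measure, so it is analytic on its Shilov boundary $M(C_B)$), Proposition~\ref{HS1} gives that $CA_B$ is pervasive on $M(C_B)$. So the content is the forward implication: pervasive $\Rightarrow$ maximal in $C_B$.

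For that direction I would argue by contradiction. Suppose $CA_B$ is pervasive on $X:=M(C_B)$ but not maximal in $C_B$; then there is a closed algebra $D$ with $CA_B \subsetneq D \subsetneq C_B$. By the Chang--Marshall description quoted just above the lemma, any such $D$ is generated over $CA_B$ by unimodular functions that are invertible in $D$; in particular $D$ contains a nonconstant unimodular $u$ with $u^{-1}=\overline u \in D$. Hence $|u|^2 = 1 \in D$ but more to the point $u$ and $\overline u$ both lie in $D$, so $D$ is not antisymmetric. On the other hand $CA_B \subseteq D \subseteq C(X)$, so $D$ is pervasive (pervasiveness passes up to any intermediate algebra, as used repeatedly in Section~\ref{maximal}). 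By Proposition~\ref{HS2}, a pervasive algebra is either all of $C(X)$ or antisymmetric; since $D$ is not antisymmetric we get $D = C(X) = C_B$, contradicting $D \subsetneq C_B$. Therefore no such $D$ exists and $CA_B$ is maximal in $C_B$.

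The only genuinely non-routine input is the identification of the Shilov boundary of $CA_B$ with $M(C_B)$ and the Chang--Marshall fact that every proper intermediate $D$ carries a nonconstant unimodular function invertible in $D$ — but both of these are explicitly recorded in the paragraph preceding the lemma and attributed to \cite{ChangMarshall2}, so I would simply cite them. Everything else is a one-line application of Proposition~\ref{HS2} (the ``pervasive and proper $\Rightarrow$ antisymmetric'' chain) in the contrapositive, exactly as in the proof of the Douglas-algebra example in Subsection~\ref{SS:Douglas}. I expect no real obstacle; the main thing to be careful about is to phrase the argument so that it is clear pervasiveness of $CA_B$ on $X$ forces pervasiveness of every $D$ with $CA_B \subseteq D \subseteq C(X)$, which is immediate from the definition since restriction to a proper closed $Y \subset X$ already has dense range for the smaller algebra.
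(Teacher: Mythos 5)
Your proposal is correct and follows essentially the same route as the paper: the paper derives the lemma from exactly the observation that any strictly intermediate $D$ contains a nonconstant unimodular function invertible in $D$, hence is not antisymmetric and so (being a proper superalgebra of a pervasive algebra) cannot exist if $CA_B$ is pervasive, while the converse comes from Proposition~\ref{HS1}. The only cosmetic difference is that for the converse it is slightly cleaner to invoke the ``integral domain'' item of Proposition~\ref{HS1} (since $CA_B\subseteq H^\infty$) rather than arguing that $CA_B$ is analytic on $M(C_B)$, which would require relating open subsets of $M(C_B)$ to sets of positive measure.
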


We now use the lemma together with a subclass of Blaschke products to show that none of these algebras is pervasive. Recall that a Blaschke product $b$ is said to be {\it interpolating} if its zero sequence forms an interpolating sequence; that is, if $(z_n)$ is the zero sequence of $b$ and for each bounded sequence $(w_n)$ of complex numbers, there exists a bounded analytic function $f$ such that $f(z_n) = w_n$ for all $n$. In particular, the zeros of $b$ must be distinct. The Blaschke product $b$ is called {\it thin} or {\it sparse}
if it is an interpolating Blaschke product with zeros $(z_n)$ satisfying
$\lim (1 - |z_n|^2) |b^\prime(z_n)| = 1.$

\begin{exam} If $B$ is a Douglas algebra with $H^\infty + C \subset B$, then $CA_B$ is
 not maximal in $C_B$, and hence not pervasive on $M(C_B)$.\end{exam}

\begin{proof} The maximal ideal space of $H^\infty + C$ is  
$M(H^\infty) \setminus \mathbb{D}$. 
Therefore, every finite Blaschke product is 
invertible in $M(H^\infty + C)$ and no 
infinite Blaschke product is invertible in $H^\infty + C$. 
By the Chang-Marshall theorem, 
$$B = [H^\infty, \overline{b_\alpha}: b_\alpha~\textup{is a Blaschke product invertible in}~B].$$
 Since $B$ properly contains $H^\infty + C$, it must contain 
invertible infinite Blaschke products. 
Furthermore, if we factor a Blaschke product
$c = c_1 c_2$ that is invertible in $B$, then since $c_j \in H^\infty \subset B$ we see that $\overline{c_1} = c_2 (\overline{c_1 c_2}) = c_2 \overline{c} \in B$. Therefore, once a Blaschke product is invertible in the algebra, every subproduct is as well.

Choose a thin Blaschke product $b_1\in B^{-1}$, and
factorize it as $b_1 = b_{11} b_{12}$, where each factor
is an infinite Blaschke product.  There exists 
$\phi \in M(H^\infty) \setminus \mathbb{D}$ with $\phi(b_{11}) = 0$.
The support set of $\phi$ (in $M(L^\infty)$)
is a weak peak set, and therefore $H^\infty|\textup{supp}~\phi$ is closed. Now, it is known \cite{GIS} that a thin Blaschke product can have at most one zero in $M(H^\infty|\textup{supp}\, \phi)$, and $b_{11}$ already has one zero there, so $b_{12}$ cannot. Therefore, $b_{12}$ is invertible in the algebra 
$H^\infty_{\textup{supp}\, \phi}$. Thus
$|\phi(b_{12})| = 1$. Now $b_{12}$ is invertible in $B$. Let 
$D = [CA_B, \overline{b_{12}}] \subseteq C_B$. 
Now $(\phi|D) \in M(D)$ 
and $\phi(b_{11}) = 0$. Therefore, $b_{11}$ is not 
invertible in $D$ and $D \ne C_B$.
Obviously, $D\not=CA_B$, so $CA_B$ is not maximal
in $C_B$.  
\end{proof}

For algebras of functions on $\mathbb{D}$, the interested reader should consult the papers of A. Izzo, \cite{Izzo1} and \cite{Izzo2}.

\section{Maximal Algebras}
\label{minimal}

\subsection{The Extension Algebras $A_E$}
Let $A$ be a uniform algebra on its Shilov boundary $X$. 
If $B$ is a closed algebra with $A\subset B\subset C(X)$, 
then we have a map 
$$\pi:\left\{
\begin{array}{rcl}
M(B) &\to& M(A),\\
\phi&\mapsto& \phi|A.
\end{array}
\right.
$$ 
This map may or may not be surjective.
If $A$ has unique representing
measures on $X$, then $\pi$ is injective from $M(B)$ into $M(A)$. 
It may also be injective in other cases. Whenever
this happens, we identify $M(B)$ with a closed subset of $M(A)$. 
We note that in all cases, $X$ is also the Shilov 
boundary of $B$, and $\pi$ restricts to  
the identity on $X$.  However, points of $X$
may have multiple preimages, as we saw in
Subsection \ref{SS:gem}. If this happens,
then there are points of $X$ that are not
p-points for $A$.

Let $E$ be a closed subset of $X$. Then the {\it $A$-convex hull} of $E$, denoted $\hat{E}$, is the set of homomorphisms in $M(A)$ that extend continuously to the 
closure of ${A|E}$ in $C(E)$ \cite[p. 39]{Ga}. We have
$$ \hat{E} = \{\phi \in M(A): \textup{supp}\, \phi \subseteq E\}.$$  
We denote by
$$A_E :=\textup{clos}_{C(X)} \{f \in C(X): f|E \in A|E\}$$
the related function algebra on $X$. The maximal ideal space of $A_E$
is $X\cup\hat{E}$.

We observe that for each closed $E\subseteq X$,
$A$ is contained in $A_E$, that if $A$ is maximal, then $A_E$
is either maximal or is $C(X)$, and that if $A$ is 
essentially pervasive, then so is $A_E$.

\begin{lem}\label{L:A_E}
Suppose $A|E$ is closed in $C(E)$. Then \\
(1) $A_E = \{f\in C(X): f|E\in A|E\}$.\\
(2) $A_E$ is maximal in $C(X)$ if and only if
$A|E$ is maximal in $C(E)$.\\
(3) $A_E$ is essentially pervasive if and only if
$A|E$ is essentially pervasive.
\end{lem}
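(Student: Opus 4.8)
The plan is to exploit the fact that, once $A|E$ is closed in $C(E)$, the algebra $A_E$ is exactly the pullback $\{f\in C(X): f|E\in A|E\}$, so that questions about $A_E$ on $X$ translate directly into questions about $A|E$ on $E$ by restriction to $E$ and extension by Tietze/Stone--Weierstrass. First I would prove (1): the set $R:=\{f\in C(X):f|E\in A|E\}$ is clearly an algebra containing the constants and $A$, and it is closed because $A|E$ is closed in $C(E)$ and the restriction map $C(X)\to C(E)$ is continuous; since $A_E$ is the $C(X)$-closure of $R$ and $R$ is already closed, $A_E=R$. (One also checks $R$ separates points of $X$: it contains $A$, which does.) This identification is the workhorse for the other two parts, and it also shows the complement $C(X)\setminus A_E$ consists of exactly those $f$ whose restriction to $E$ is not in $A|E$.

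For (2), I would argue in both directions using (1). If $A|E$ is not maximal in $C(E)$, pick a closed algebra $A|E\subsetneq B'\subsetneq C(E)$ and set $B:=\{f\in C(X):f|E\in B'\}$; by the same reasoning as in (1) this is a closed algebra, and the two strict inclusions are inherited (strictness on the left because some $g\in B'\setminus A|E$ extends to $C(X)$ and lies in $B\setminus A_E$; strictness on the right because some $h\in C(E)\setminus B'$ extends to $C(X)$ and lies in $C(X)\setminus B$), so $A_E$ is not maximal. Conversely, if $A_E\subsetneq B\subsetneq C(X)$ is a closed algebra, then $B|E$ is a closed subalgebra of $C(E)$ (need to check closedness — see below) satisfying $A|E\subseteq B|E\subseteq C(E)$; one shows both inclusions are strict, the right one because $B\ne C(X)$ forces $B|E\ne C(E)$ (since $B\supseteq A_E$ contains all $f\in C(X)$ vanishing on $E$, so $B|E=C(E)$ would give $B=C(X)$), and the left one because $B\supsetneq A_E$ gives some $f\in B$ with $f|E\notin A|E$. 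Hence $A|E$ is not maximal. I expect the one genuinely delicate point to be verifying that $B|E$ is closed in $C(E)$ when $A_E\subseteq B$: the cleanest route is to observe that $B\supseteq A_E$ contains every continuous function on $X$ vanishing on $E$, so $B=\{f\in C(X):f|E\in B|E\}$ by the same argument as in (1), and then $B|E$ is the continuous image of the closed set $B$ under the (open, since every function on $E$ extends) restriction quotient map, hence closed; alternatively one cites \cite[Theorem 2.8.1]{Browder}-type closedness once $E$ is recognized as containing the essential set of $B$.

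For (3), I would first note that the essential set of $A_E$ equals (or contains, and one argues equality) the essential set of $A|E$ viewed inside $X$: indeed $f\in C(X)$ vanishing on $E$ lies in $A_E$ by (1), so the essential set of $A_E$ is contained in $E$, and on $E$ the condition ``$f|E$ extends by $0$ off $E$ into $A_E$'' matches ``$f\in A|E$,'' giving $\mathrm{ess}(A_E)=\mathrm{ess}(A|E)=:E_0\subseteq E$. Then $A_E|E_0 = (A|E)|E_0$ as subalgebras of $C(E_0)$, again by (1), and essential pervasiveness of $A_E$ means $A_E|E_0$ is pervasive on $E_0$, which is literally the same statement as essential pervasiveness of $A|E$. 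So (3) is immediate once the essential sets are identified. The main obstacle throughout is bookkeeping the closedness and the strictness of inclusions under restriction to and extension from $E$; the key enabling facts are the Tietze extension theorem (every continuous function on the closed set $E$ extends to $X$) and part (1), and I would state these up front and reuse them in (2) and (3).
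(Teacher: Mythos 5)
Your proposal is correct, and parts (1), (3) and the forward direction of (2) run exactly as in the paper: (1) is the observation that $\{f\in C(X):f|E\in A|E\}$ is the preimage of a closed set under the continuous restriction map, hence already closed; the forward half of (2) pushes an intermediate algebra $B'$ on $E$ up to $B'_E$ on $X$ and uses Tietze to get both strict inclusions; and (3) rests on identifying the essential sets of $A_E$ and $A|E$ and noting that the two restrictions to that common essential set coincide. The one place where you genuinely diverge from the paper is the step you yourself flagged as delicate: the reverse direction of (2). The paper does \emph{not} prove that $B|E$ is closed; it passes to the closure $B_1$ of $B|E$, shows $A|E\subsetneq B_1$ by the pullback argument, and rules out $B_1=C(E)$ by taking a nonzero measure $\mu\perp B$, noting $\mu\perp A_E$ forces $\mathrm{supp}\,\mu\subseteq E$ and hence $\mu\perp B_1$. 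You instead prove $B|E$ is itself closed, using that $B\supseteq A_E$ contains the ideal $I_E$ of functions vanishing on $E$, so $B$ is saturated for the restriction quotient map $C(X)\to C(E)$ (surjective by Tietze, hence open by the open mapping theorem), and the image of a saturated closed set under a quotient map is closed; your fallback citation of the Browder-type closedness result is equally valid since $E$ contains the essential set of $B$. Both routes work; yours buys a slightly cleaner statement ($B|E$ is literally the intermediate algebra, no closure needed) at the cost of invoking the open mapping theorem or Browder, while the paper's annihilating-measure trick avoids the closedness question entirely. Either way the argument is complete.
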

\begin{proof}

 (1) follows from the facts that $A|E$ is closed in $C(E)$ and $X$ is the Shilov boundary for $A$.  We present a proof of (2) and (3) for the reader's convenience.

 (2) First, suppose $A|E$ is not maximal in $C(E)$. Choose
 a uniform algebra $B$ on $E$, properly contained between them. Then 
it is easy to see that $B_E$ is properly
contained between $A_E$ and $C(X)$, so $A_E$ is not
maximal in $C(X)$.

On the other hand, suppose $A_E$ is not maximal in
$C(X)$, and choose a uniform algebra $B$ on $X$,
properly contained between them. Let 
$B_1$ be the closure in $C(E)$ of the restriction
algebra $B|E$. Then $B_1$ is a uniform algebra on
$E$, contained between $A|E$ and $C(E)$.
If $A|E=B_1$, then one checks that 
$B\subseteq B_E = A_E$, a contradiction. So 
$A|E\subset B_1$.
Suppose $B_1=C(E)$. Take a measure $\mu$ on $X$
that annihilates $B$. Then $\mu\perp A_E$,
so $\textup{supp}\,\mu\subseteq E$,
so $\mu\perp B_1$, and we conclude that $\mu=0$. Thus $B=C(X)$,
another contradiction. Thus $B_1\subset C(E)$. Thus $A|E$ is not maximal in $C(E)$.

(3) It is clear from (1) that $A_E$ and 
$A|E$ have the same essential set, and easy to
see that this set is $E\cap F$,
where $F$ is the essential set of $A$. Using (1)
again, we have $A_E|(E\cap F)= (A|E)|(E\cap F)$,
and this gives the result.
\end{proof}

\begin{lem}\label{lemma2}   
Let $\phi\in \mathcal{D}(A)$.
Then $E=\overline{\textup{supp}}\,\phi$ is a maximal antisymmetric set for 
$A_E$. 
\end{lem}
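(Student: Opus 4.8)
The plan is to show two things: that $E = \overline{\textup{supp}}\,\phi$ is a set of antisymmetry for $A_E$, and that it is maximal among such sets, i.e. no strictly larger closed subset of $X$ is a set of antisymmetry for $A_E$. For the first part, I would use the description of $A_E$ together with the structure of $\phi$ as a character with a remote representing measure. Suppose $g\in A_E$ is real-valued on $E$; I want to conclude $g$ is constant on $E$ (note that $A_E$-sets of antisymmetry are subsets of $X$, and "constant" here means constant on $E$, which is what the definition of antisymmetric set requires). The key point is that $\phi\in\mathcal{D}(A)$ has support sets that all lie in $E$ (indeed $E$ is by definition the closure of the union of all support sets for $\phi$), so for any representing measure $\lambda$ for $\phi$ with $\textup{supp}\,\lambda\subseteq E$ we have $\phi(f)=\int f\,d\lambda$ for $f\in A$. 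I would try to show that $\phi$ extends to a character of $A_E$ (this uses the fact that $M(A_E)=X\cup\hat E$ and $\textup{supp}\,\lambda\subseteq E$ means $\phi\in\hat E$), and that since any $g\in A_E$ with $g|E\in A|E$ which is real on $E$ forces $\phi(g)=\int g\,d\lambda$ to be real; combining this with the fact that $\phi$ is remotely represented should pin down, via the usual Lemma~\ref{lemma1}-type argument or via the sets-of-antisymmetry machinery for $A|E$ on $E$, that $g$ is constant on $E$. Really, the cleanest route is: $A_E|E$ (which equals the closure of $A|E$ in $C(E)$, and equals $A|E$ itself if $A|E$ is closed) is antisymmetric on $E$ — this I should verify from the fact that $\phi$, as a character of $A|E$, is remotely represented and hence $A|E$ restricted to $\textup{supp}\,\lambda=E$ (for the right choice of $\lambda$) cannot be all of $C(E)$; one then invokes Proposition~\ref{HS2} applied to $A|E$ on $E$ — but this needs $A|E$ proper and pervasive on $E$, which we don't have in general. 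So instead I will argue directly: if $g\in A_E$ is real on $E$, apply a representing measure $\lambda$ for $\phi$ with support equal to a minimal support set $S\subseteq E$; then $\phi(g)\in\R$, and for any $\alpha\in\C$, $g+\alpha$ and $\overline{g}+\overline\alpha=g+\overline\alpha$ on $E$, so $g|E$ lies in the "real part" of $A_E|E$ and one shows $g|S$ is constant using that $S$ is a minimal (hence perfect, or a singleton) support set and the maximum principle / Jensen-type inequality for the representing measure. Then one propagates constancy from $S$ across all support sets to get $g$ constant on $E$.

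The second part — maximality of $E$ as an antisymmetric set for $A_E$ — should follow from Lemma~\ref{lemma1} together with the outer-support definition. Suppose $E' \supsetneq E$ is closed and is a set of antisymmetry for $A_E$; pick $b\in E'\setminus E$. Since $b\notin E=\overline{\textup{supp}}\,\phi$, in particular $b$ is not in any support set of $\phi$, but more usefully: I want to produce a function in $A_E$ that is real (even constant) on $E$ but nonconstant on $E'$, contradicting antisymmetry of $E'$. Take $b\in E'\setminus E$ and, using that $E$ is closed and $b\notin E$, choose (by normality of $X$ and the Tietze/Urysohn theorem, or rather by density — here is where pervasiveness of $A$ on $X$ enters, since $X$ is the Shilov boundary and $E\cup\{b\}$ may be a proper closed subset) a function in $C(X)$, approximable by $A_E$, that vanishes on $E$ and equals $1$ at $b$; such a function lies in $A_E$ because its restriction to $E$ is $0\in A|E$. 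This function is constant (namely $0$) on $E$ but takes value $1$ at $b\in E'$, so it is nonconstant on $E'$ unless it is identically $1$ on $E'$, which it is not since it vanishes at points of $E\subseteq E'$. Hence $E'$ is not a set of antisymmetry for $A_E$.

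The main obstacle I anticipate is the first part — proving $E$ is genuinely a set of antisymmetry for $A_E$, not merely for $A$. The subtlety is that $A_E$ is larger than $A$, so there are more candidate functions $g$ to rule out, and these $g$ are only controlled through the constraint $g|E\in A|E$ (or in the closure thereof). The representing-measure argument handles functions in $A$, but I must be careful that a function $g\in A_E$, real on $E$, really does give $\phi(g)\in\R$ for the relevant minimal-support representing measures and that I can then deduce $g$ is constant on each support set and hence on their closure $E$. I expect this to require either (a) a direct Jensen-inequality argument on a minimal support set $S$, using that $\lambda|S$ is multiplicative-like for $A|E$ and that $g|S$ real plus the maximum principle forces $g|S$ constant, followed by a connectedness/closure argument to spread constancy over all of $E$; or (b) citing the fact that on each support set $S$ the algebra $A|S$ has $\phi$ as a character with unique representing measure $\lambda|S$ whose support is all of $S$, which by the remote-representation hypothesis and Lemma~\ref{lemma1} means $A|S\neq C(S)$, and then a clean antisymmetry argument. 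Route (a) is the one I would actually write out, being careful about the case where a minimal support set is the singleton $\{\phi\}$ (which can only happen if $\phi\in X$ is a p-point, contradicting $\phi\in\mathcal D(A)$ unless $\phi$ also has a genuinely remote, non-singleton support set — so in fact minimal support sets here can be taken perfect). I would double-check this edge case first, since it is exactly the kind of thing that breaks such arguments.
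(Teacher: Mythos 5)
Your plan follows the paper's proof in all essentials: $\phi$ extends to a character of $A_E$ because its representing measures are supported in $E$; a function in $A_E$ that is real-valued on $E$ is constant on the support of each representing measure (the standard variance/Jensen argument); and maximality follows from a real Urysohn function vanishing on $E$, which lies in $A_E$ by definition since $0\in A|E$. Two points need tightening. First, the step you call ``propagating constancy from $S$ across all support sets'' is the crux and cannot be done by propagation from a single minimal support set: distinct support sets need not meet, so constancy on one says nothing about another. The correct move (and the paper's) is to apply the argument to \emph{every} representing measure $\eta$ for $\phi$ and to observe that the constant value of $g$ on $\textup{supp}\,\eta$ is forced to be $\int g\,d\eta=\phi(g)$, which is the same for all $\eta$; hence $g\equiv\phi(g)$ on the union of all support sets and therefore on its closure $E$. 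Once this is in place, your detour through minimal support sets, perfectness, and the singleton edge case becomes unnecessary. Second, in the maximality half, pervasiveness of $A$ is neither a hypothesis of the lemma nor needed: the Urysohn function $h$ with $h|E=0$ and $h(b)=1$ lies in $A_E$ for exactly the reason you give in your parenthetical, so the aside about density should be deleted.
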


We note that $E$ need not be a maximal antisymmetric set for $A$. 
Consider, for instance,
$A(U)$, where $U$ is the union of two tangent disks in $\C$.
 
\begin{proof} Let $E=\overline{\textup{supp}}\,\phi$
and $A_1=A_E$.
The restriction homomorphism $A_1\to A_1|E$ induces
a map $M(A_1|E)\to M(A_1)$. Since $\phi(f)$
is bounded by the sup norm of $f$ on $E$,
it follows that $\phi$ belongs to $M(A_1|E)$,
and hence may be regarded as an element
of $M(A_1)$. Furthermore, it is represented on $A_1$
by each representing measure for $\phi$ on $A$.
 
If $f\in A_1$ is real-valued on $E$, then for each representing
measure $\eta_\phi$ for $\phi$ on $A$, the function
$f$ is constant on $\textup{supp}\,\eta_\phi$, with
value, say $c(\eta_\phi)$. If $\eta_\phi$ and $\xi_\phi$ both represent $\phi$
on $A$,
then $c(\eta_\phi)=\int f d\eta_\phi = \phi(f) = \int f d\xi_\phi = c(\eta_\psi)$. Thus
$f$ is constant on $E$.   Thus $E$ is an antisymmetric
set for $A_1$.
 
If $E\subset F\subseteq X$, then
there is a real-valued continuous function $f$ on $X$ that is
constant on $E$ and nonconstant on $F$. Then $f\in A_1$,
so $F$ cannot be antisymmetric for $A_1$.
\end{proof}

\begin{cor}\label{C:6}
With $E$ as in Lemma \ref{lemma2} , $A_E|E$ is closed and
antisymmetric on $E$, and hence essential. Also
$E$ is the essential set for $A_E$.\qed
\end{cor}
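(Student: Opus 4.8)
The plan is to read the four assertions directly off Lemma~\ref{lemma2}, which identifies $E=\overline{\textup{supp}}\,\phi$ as a \emph{maximal} set of antisymmetry for $A_E$. Granting that, the closedness of $A_E|E$ in $C(E)$ is immediate from the Bishop Antisymmetric Decomposition Theorem applied to the uniform algebra $A_E$, since that theorem asserts that the restriction of an algebra to any maximal set of antisymmetry is closed. The antisymmetry of $A_E|E$ on $E$ is just a restatement of $E$ being a set of antisymmetry for $A_E$: any real-valued $g\in A_E|E$ is $g=f|E$ for some $f\in A_E$ real-valued on $E$, whence $f$, and so $g$, is constant on $E$.

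Next I would observe that $E$ is not a one-point set, so that $A_E|E$ is a proper (closed) uniform algebra on $E$. Indeed, $\phi\in\mathcal{D}(A)$ carries a remote representing measure $\lambda$ with $\textup{supp}\,\lambda\subseteq E$; if $\textup{supp}\,\lambda$ were a single point $x$ then $\lambda=\delta_x$ would have a point mass at $\phi=x$, contradicting remoteness. With $E$ having at least two points, Proposition~\ref{HS2} (antisymmetric implies essential) shows that $A_E|E$ is essential on $E$, which is the third assertion.

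For the final assertion, that $E$ is the essential set of $A_E$ regarded as an algebra on $X$, write $S$ for that essential set. The inclusion $S\subseteq E$ is cheap: if $f\in C(X)$ vanishes on $E$ then $f|E=0\in A|E$, so $f\in A_E$, which shows $E$ itself has the defining property of the essential set and hence that $S\subseteq E$. For $S\supseteq E$ I would argue by contradiction. If $S\subsetneq E$, pick $p\in E\setminus S$ and, using that $E$ has a second point $q\ne p$, invoke Urysohn's lemma to produce a real-valued $h\in C(X)$ with $h\equiv 0$ on $S\cup\{q\}$ and $h(p)=1$. Since $h$ vanishes on the essential set $S$ we get $h\in A_E$; but $h$ is real-valued on $E$ with $h(p)=1\ne 0=h(q)$, so $h$ is not constant on $E$ --- contradicting that $E$ is a set of antisymmetry for $A_E$. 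Hence $S=E$.

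No step here is a genuine obstacle: everything is a direct appeal to Bishop's theorem, Lemma~\ref{lemma2}, and Proposition~\ref{HS2}, together with a one-line Urysohn argument. The only points needing a moment's care are to use antisymmetry of $E$ \emph{for $A_E$} --- recall that $E$ need not be antisymmetric for $A$ itself, as the remark following Lemma~\ref{lemma2} notes --- and to record explicitly that $E$ has more than one point before invoking Proposition~\ref{HS2}.
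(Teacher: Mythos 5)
Your proof is correct and follows exactly the route the authors intend: the paper marks this corollary as immediate (\verb|\qed| with no written proof), relying on Lemma~\ref{lemma2}, Bishop's theorem for closedness, and Proposition~\ref{HS2} for essentiality, which is precisely what you do. Your added care in checking that $E$ has at least two points (via remoteness of the representing measure) and the Urysohn argument pinning down the essential set as exactly $E$ supply the details the authors omit, and both are sound.
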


\begin{cor}\label{C:7} Let $\phi\in\mathcal{D}(A)$.
If $A_{\overline{\textup{supp}}\,\phi}$ is essentially pervasive, 
then 
$$\underline{\textup{supp}}\,\phi=\overline{\textup{supp}}\,\phi.$$
\end{cor}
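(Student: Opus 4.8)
The statement to prove is Corollary~\ref{C:7}: for $\phi\in\mathcal D(A)$, if $A_{\overline{\textup{supp}}\,\phi}$ is essentially pervasive, then $\underline{\textup{supp}}\,\phi = \overline{\textup{supp}}\,\phi$. Write $E = \overline{\textup{supp}}\,\phi$ and $A_1 = A_E$. The plan is to combine Corollary~\ref{C:6} (which tells us $A_1|E$ is closed, antisymmetric, and \emph{essential} on $E$, and that $E$ is the essential set of $A_1$) with Corollary~\ref{C:supp} (pervasiveness forces the core remote support to be everything) applied to the algebra $A_1|E$ on its Shilov boundary $E$.

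First I would unwind what essential pervasiveness of $A_1$ means: by definition $A_1|(\text{essential set})$ is pervasive, and by Corollary~\ref{C:6} the essential set is exactly $E$, so the hypothesis says precisely that $A_1|E$ is pervasive on $E$. (Here I am using that $E$ is the Shilov boundary of $A_1|E$, which holds since pervasiveness of a restriction algebra forces the underlying compact set to be its Shilov boundary — or one can note directly that $E\subseteq X$ is a support set closure, so nothing in $E$ can be shaved off the boundary.) Next, observe that $\phi$ sits in $\mathcal D(A_1|E)$: as shown in the proof of Lemma~\ref{lemma2}, every representing measure $\eta$ for $\phi$ on $A$ is a representing measure for $\phi$ on $A_1$, supported in $E$, hence gives a representing measure for $\phi$ on $A_1|E$; and since $\phi\in\mathcal D(A)$ it has a remote representing measure $\lambda$ (no point mass at $\phi$), which remains remote on $A_1|E$. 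So $\phi\in\mathcal D(A_1|E)$.

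Now apply Corollary~\ref{C:supp} to the pervasive algebra $A_1|E$ on $E$ and the character $\phi\in\mathcal D(A_1|E)$: it yields $\underline{\textup{supp}}\,\phi = E$, where this core remote support is computed with respect to representing measures for $\phi$ on $A_1|E$. The remaining task — and the one genuine point requiring care — is to transfer this back to $A$: I must check that the core remote support of $\phi$ computed with $A$-representing measures equals the core remote support computed with $(A_1|E)$-representing measures. The inclusion $\underline{\textup{supp}}^{A}\phi \subseteq E = \underline{\textup{supp}}^{A_1|E}\phi$ is automatic once we know every $A$-representing measure for $\phi$ is also an $(A_1|E)$-representing measure (so the $A_1|E$ family of measures is a subfamily, hence its supports intersect to a \emph{smaller} set — wait, this points the wrong way, so the argument must instead use that the two families of remote representing measures coincide). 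This is the crux: I claim the remote representing measures for $\phi$ on $A$ and on $A_1|E$ are literally the same set of measures. One direction was just noted. For the converse, a measure $\nu$ on $E$ representing $\phi$ on $A_1|E$ represents $\phi$ on $A_1\supseteq A$, hence on $A$; and remoteness is a property of $\nu$ alone. Therefore the two families agree, the two core remote supports agree, and we get $\underline{\textup{supp}}\,\phi = E = \overline{\textup{supp}}\,\phi$, as desired.

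The step I expect to be the main obstacle is precisely the bookkeeping in that last paragraph: pinning down that "$\mathcal D(A_1|E)$ relative to $E$" behaves compatibly with "$\mathcal D(A)$ relative to $X$" for this particular $\phi$, i.e.\ that passing to the extension algebra $A_E$ and then restricting to $E$ neither creates nor destroys remote representing measures for $\phi$. Everything else is a direct invocation of Corollaries~\ref{C:supp} and \ref{C:6}. Once the measure-identification is clean, the proof is two or three lines.
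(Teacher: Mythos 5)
Your argument is correct and follows essentially the same route as the paper: identify $E=\overline{\textup{supp}}\,\phi$ as the essential set of $A_E$ via Corollary~\ref{C:6}, conclude that $A_E|E$ is pervasive on $E$, apply Corollary~\ref{C:supp} to $\phi$ there, and transfer back by noting that the (remote) representing measures for $\phi$ on $A$, on $A_E$, and on $A_E|E$ coincide. Your explicit check of that last measure identification is in fact slightly more careful than the paper's one-line assertion of it.
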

\begin{proof} Let $E= \overline{\textup{supp}}\,\phi$.
Since $A_E|E$ is closed, Lemma \ref{L:A_E} (applied with $A=A_E$)
tells us that $A_E|E$ is essentially pervasive in $C(E)$. 
Since it is also essential,
it is pervasive on $E$. Also, 
$\phi$ belongs to $\hat E$, so $\phi\in M(A_E)\setminus X$.
Further, $\phi$ has the same set of representing
measures on $A$ and on $A_E$, so the sets
$\underline{\textup{supp}}\,\phi$ and $\overline{\textup{supp}}\,\phi$
are the same for $A$ and for $A_E$.
By Corollary \ref{C:supp}, $\underline{\textup{supp}}\,\phi=E$,
as required.
\end{proof}

\

Combining our results above, we have the following.

\begin{thm}\label{T:max-supp} Let $A$ be a uniform algebra on its 
Shilov boundary $X$. 
Suppose $A$ is essentially pervasive, and let $E$ be 
the essential set of $A$.  Let $\phi\in \mathcal{D}(A)$.
Then 
$$\underline{\textup{supp}}\,\phi=E.$$
\end{thm}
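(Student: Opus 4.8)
The plan is to reduce the statement to Corollary~\ref{C:7}, whose hypothesis requires that $A_{\overline{\textup{supp}}\,\phi}$ be essentially pervasive, and then to observe that essential pervasiveness of $A$ is inherited by the extension algebras $A_F$. Concretely, write $E_\phi = \overline{\textup{supp}}\,\phi$ for the outer support of $\phi$. First I would note that $\phi \in \mathcal D(A)$, so $\phi$ has a remote representing measure $\lambda$; in particular $\phi \in \hat{E}_\phi$ and $\textup{supp}\,\lambda \subseteq E_\phi$. By the remark recorded just before Lemma~\ref{L:A_E} (``if $A$ is essentially pervasive, then so is $A_E$''), applied with $E = E_\phi$, the algebra $A_{E_\phi}$ is essentially pervasive. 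Now Corollary~\ref{C:7} applies verbatim and gives $\underline{\textup{supp}}\,\phi = \overline{\textup{supp}}\,\phi = E_\phi$, where these supports are computed for $A$ (the proof of Corollary~\ref{C:7} already checks that the representing-measure sets, and hence both supports, agree for $A$ and $A_{E_\phi}$).

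It then remains to identify $E_\phi$ with the essential set $E$ of $A$. One containment is easy: by Corollary~\ref{C:6} (with the same $\phi$), $E_\phi$ is the essential set of $A_{E_\phi}$, and since $A \subseteq A_{E_\phi}$ the essential set can only shrink, so $E_\phi \subseteq E$. For the reverse inclusion I would argue that $\phi$, being remotely represented, in fact has a representing measure whose support is all of $E$: since $A$ is essentially pervasive and $E$ is the essential set, the restriction $A|E$ is pervasive on $E$, so by Proposition~\ref{HS?} (applied to the pervasive algebra $A|E$ on $E$) any remote representing measure for $\phi$ on $A|E$ must have support equal to $E$. One checks $\phi$ descends to a character of $A|E$ remotely represented there — this uses that off $E$ the algebra $A$ contains every continuous function vanishing on $E$, so a representing measure can be modified to live on $E$ without a point mass at $\phi$ — giving $E \subseteq \overline{\textup{supp}}\,\phi = E_\phi$. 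Combining, $E_\phi = E$, and with the previous paragraph $\underline{\textup{supp}}\,\phi = E$.

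The main obstacle I anticipate is the bookkeeping around whether $\phi \in \mathcal D(A)$ really forces a remote representing measure supported on the whole essential set, and the passage between representing measures for $A$, for $A|E$, and for $A_{E_\phi}$ — one must be careful that ``remote'' (no point mass at $\phi$) is preserved under these transfers, and that $\phi$ genuinely lies in $M(A|E)\setminus E$ rather than being a p-point after restriction. Once the identification $\overline{\textup{supp}}\,\phi = E$ is in hand, the rest is a direct invocation of Corollary~\ref{C:7}, so the real content is concentrated in that support computation together with the inheritance of essential pervasiveness by $A_{E_\phi}$, both of which are already available from the lemmas above.
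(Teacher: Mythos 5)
Your first half---deducing that $A_{\overline{\textup{supp}}\,\phi}$ is essentially pervasive, invoking Corollary~\ref{C:7} to get $\underline{\textup{supp}}\,\phi=\overline{\textup{supp}}\,\phi$, and using Corollary~\ref{C:6} together with monotonicity of essential sets to get $\overline{\textup{supp}}\,\phi\subseteq E$---is correct and is essentially what the paper does (the paper does not quote the unproved observation preceding Lemma~\ref{L:A_E} but re-derives it in this special case from $\overline{\textup{supp}}\,\phi\subseteq E$; either way is fine).

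The reverse inclusion is where you diverge from the paper and where there is a genuine gap. You want to push $\phi$ down to a character of $A|E$ with a remote representing measure on $E$ and then apply Proposition~\ref{HS?} to the pervasive algebra $A|E$. The crux, which you flag but do not prove, is that a remote representing measure for $\phi\in\mathcal{D}(A)$ is supported in $E$; and the mechanism you suggest---that ``a representing measure can be modified to live on $E$''---is not the right one: restricting a representing measure to $E$ and renormalising does not in general represent $\phi$. What is true is that any representing measure for $\phi$ is \emph{already} carried by $E$, and proving this is exactly where $\phi\in\mathcal{D}(A)$ must enter. Indeed, let $I=\{f\in C(X):f|E=0\}$, a closed ideal of $A$. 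If $\phi$ does not annihilate $I$, then $\phi|I$ is a character of $I\cong C_0(X\setminus E)$, so $\phi$ is evaluation at some $y_0\in X\setminus E$; but then every representing measure for $\phi$ restricts on $X\setminus E$ to $\delta_{y_0}$, hence equals $\delta_{y_0}$, so $y_0$ is a p-point and $\phi\notin\mathcal{D}(A)$. Therefore $\phi$ annihilates $I$, and positivity ($\int g\,d\lambda=\phi(g)=0$ for $0\le g\le 1$ in $I$) forces $\lambda(X\setminus E)=0$; remoteness is then preserved trivially and your transfer to $A|E$ goes through. The paper sidesteps all of this: having shown $E_\phi\subseteq E$, it takes a remote representing measure $\lambda$ (supported in $E_\phi$ by the very definition of the outer support) and, as in Lemma~\ref{lemma1}, produces a nonzero annihilating measure $f\lambda$ with $f\in\ker\phi$, supported in $E_\phi$; hence $A|E_\phi$ is not dense in $C(E_\phi)$, and pervasiveness of $A|E$ on $E$ forbids $E_\phi$ from being a proper subset of $E$. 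You should either fill in the support argument as above or switch to this shorter route.
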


\begin{proof} Let $E'=\overline{\textup{supp}}\,\phi$.
Then by Corollary \ref{C:6}, $E'$ is the essential set
of $A_{E'}$. Since $A\subset A_{E'}$, it follows that
$E'\subset E$.
Hence $A|F$ is dense in $C(F)$ whenever $F$
is a closed proper subset of $E'$. It follows that
$A_{E'}$ is pervasive on $E'$, i.e. $A_{E'}$
is essentially pervasive. 
Applying Corollary \ref{C:7}, we get
$\underline{\textup{supp}}\,\phi=E'$. 

There is at least one representing measure
$\lambda$ for $\phi$, necessarily supported
on $E'$, and for every
$f\in$ker $ \phi$ we get an annihilating measure
$f\lambda$, supported on $E'$.  Not all
these can be zero, so $A|E'$ is not dense
in $C(E')$. Thus
$E'$ cannot be a proper subset of $E$,
so $E'=E$. The result follows. 
\end{proof}

\subsection{Other Maximal $A(U)$}\label{SS:Warc}

Consider open sets $U$ dense in
the sphere, for which $A(U)$ has nonconstant elements.
We have noted that all such $A(U)$ are pervasive.

Wermer \cite{W} considered the algebra $A(U)$ on $X$, where
$U=\hat\C\setminus X$ and $X\subset\C$ is an arc having positive
area. He showed 
that this algebra separates points on $\hat\C$, and used
it to construct an arc in $\C^3$ that is not polynomially-convex.
He and Browder \cite{BW} studied the subalgebra $A_\psi$
of $A(\mathbb{D})$
mapped isomorphically  by \lq\lq conformal welding" to $A(U)$, and 
showed that it was Dirichlet on its Shilov boundary if and
only if the associated welding map is singular. Translated
back to $U$, this says that $A(U)$ is Dirichlet if and
only if the two pieces of harmonic measure 
(say for the point $\infty$)
on the two sides of $X$ are mutually-singular. Their
proof used the F. and M. Riesz Theorem,
and an analysis of $A_\psi$ as a subalgebra
of the disk algebra.  Their
result applies to any arc $X$, and characterises those
for which $A(U)$ is Dirichlet on $X$.  They
managed to construct an example by applying
the Ahlfors-Beurling quasiconformal
extension theorem and quasiconformal welding.

Later, Bishop, Carleson, Garnett and Jones \cite{BCGJ}
showed that the two pieces of harmonic measure are
mutually-singular as soon as $X$ has no tangents.
Thus $A(U)$ is Dirichlet whenever $X$ has no tangents.

\begin{exam} If $X\subset\C$ is an arc having no tangents,
and $U=\hat\C\setminus X$, then $A(U)$
is maximal in $C(X)$.
\end{exam}

For the reader's convenience, we give a direct 
proof that establishes this
without transferring to the unit disk.  
A proof of this kind has not appeared in print,
but see \cite[Exercise 1(g), p. 63]{Ga}.  The key
point used here is that $A(U)$ is Dirichlet.  The
Dirichlicity can also be established without
passing to the disk, by applying the capacity
condition of Gamelin and Garnett, and a result 
of Bishop.

\begin{proof}
By Arens' Theorem \cite[Theorem II.1.9, p.31]{Ga}, 
the maximal ideal space
of $A(U)$ is $\hat\C$.
Since $A=A(U)$ is Dirichlet, it has unique representing 
measures, so its harmonic measure is the only representing
measure for a point $a\in U$ on $A$. Suppose
$B$ is a uniform algebra contained between 
$A$ and $C(X)$, and consider the map
$\pi:M(B)\to M(A)$. 

If there is some point $a\in U$ that is omitted by $\pi$,
then $z\mapsto 1/(z-a)$ belongs to $B$,
and thus $B$ contains the closure on $X$ of the algebra
of functions holomorphic near $X$, hence $B=C(X)$.
 
So suppose the image of $\pi$ contains $U$. Then
$\pi$ is injective, since for $a\in U$
all points of $\pi^{-1}(a)$
must share the same representing measure,
namely the harmonic measure for $a$.
Thus $B$ is an algebra of functions on
$\hat\C$. Moreover, since it is represented
on $U$ by the harmonic measures, 
each $f\in B$ is harmonic on $U$.
Thus $gf$ is harmonic on $U$,
for each $g\in A$ and $f\in B$.
But then
$$ 0=\Delta(fg)=\partial\bar{\partial}(fg)
= \partial(g\bar{\partial}f) =
(\partial g)( \bar{\partial}f)
$$
on $U$.
It is not hard to see that for each $a\in U$
there is some $g\in A$ with $\partial g(a)\not=0$
(just take a nonconstant $g\in A$, and if
$g'(a)=0$, consider the first $k\in\N$
with $g^{(k+1)}(a)\not=0$, and form
$(g(z)-g(a))/(z-a)^k$),
so we conclude that $f$ is holomorphic on $U$.
Thus $B=A$.
\end{proof}

We remark that Hoffman and Singer constructed
a maximal algebra on an arc, by starting with
a similar $A(U)$, but having complement
consisting of two arcs, and
forming a quotient algebra.  They left open
the question whether such an $A(U)$
could be maximal.

\subsection{Relative Maximality}

In this section we study relative maximality, a concept that permits strongly logmodular algebras to be maximal relative to an algebra other than $C(X)$. Throughout this subsection, $A$ will be a logmodular subalgebra of $C(X)$ and $B$ will represent
a closed algebra with $A\subset B\subseteq C(X)$.

Given a set $\Omega \subset M(A)$, Guillory and Izuchi study relative minimal support sets in the context of subalgebras of $L^\infty$ containing $H^\infty$, (see \cite{GI}, as well as \cite{GIS}). Our result is motivated by their work. Thus, we will say a point $\phi \in M(A) \setminus M(B)$ is a {\it minimal support point
(relative to $M(A) \setminus M(B)$)} if there is no point $\psi \in M(A) \setminus M(B)$ with $\textup{supp}\,\psi \subset \textup{supp}\,\phi$. The goal of this section is to prove Theorem~\ref{minimpliesmax}, below, which applies to Douglas algebras. Douglas-like subalgebras of $L^\infty(\mathbb{D}, dA)$ (where $dA$ denotes area measure on $\mathbb{D}$) have recently been investigated \cite{AxlerZheng}.

Before we consider relative maximal algebras, we should note that if $\textup{supp}\, \phi$ is a proper nontrivial subset of $X$, then the algebra $A_{\textup{supp}\,\phi}$ is not essential and therefore not pervasive.

\begin{thm}\label{minimpliesmax}  Let $A$ be a logmodular algebra on  $X$ and
 $B$ an algebra with $A \subset B \subseteq C(X)$. Suppose that

\begin{enumerate}
\item[a)]  for each algebra $D$ with $D \subseteq C(X)$ there exists a nonempty index set $I$ and unimodular functions 
$$\mathcal{U} = \{u_\alpha: \alpha \in I\} \subset A ~\mbox{such that}~ D = [A, \overline{u_\alpha}: u_\alpha \in \mathcal{U}] ~\mbox{and}~$$
\item[b)] whenever $B_1, B_2$ are subalgebras of $C(X)$ containing $A$ and $\psi \in M(A) \setminus \big(M(B_1) \cup M(B_2)\big)$, there exists $u \in A$, unimodular, such that $0 \notin u(M(B_1) \cup M(B_2))$ but $\psi(u) = 0$.
\end{enumerate} Then $\phi \in M(A) \setminus M(B)$ is a minimal support point
if and only if $B \cap A_{\textup{supp}\, \phi}$ is maximal in $B$. 
\end{thm}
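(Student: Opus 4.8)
The plan is to establish the two implications separately, using the extension algebras $A_E$ and the structural hypotheses (a), (b) to control the maximal ideal spaces. Throughout write $E = \textup{supp}\,\phi$ and $C = B \cap A_E$; note $A \subseteq C \subseteq B$ since $A \subseteq A_E$ always and $A \subseteq B$ by hypothesis. The maximal ideal space of $A_E$ is $X \cup \hat E$, where $\hat E = \{\psi \in M(A): \textup{supp}\,\psi \subseteq E\}$, and since $A$ is logmodular it has unique representing measures, so $\pi: M(B) \to M(A)$ (and likewise for every intermediate algebra) is injective; we therefore regard all the relevant maximal ideal spaces as subsets of $M(A)$, all sharing Shilov boundary $X$. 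The key bookkeeping fact I would isolate first is a description of $M(C)$: a character of $A$ extends to $C = B \cap A_E$ precisely when it lies in $M(B)$ or has support contained in $E$; more carefully, $M(C) = M(B) \cup \hat E$ as subsets of $M(A)$ — the inclusion $M(B)\cup\hat E\subseteq M(C)$ is clear since $C\subseteq B$ and $C\subseteq A_E$, and the reverse uses that a character of $C$ not in $M(B)$ must fail to extend across all of $B$, which by hypothesis (a) (applied to $B$) and the fact that every unimodular $u_\alpha\in A$ generating $B$ over $A$ must then be non-vanishing at that character forces its support into a set where... — this is where hypothesis (b) does its work, and I will come back to it.

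\emph{Suppose $\phi$ is not a minimal support point.} Then there is $\psi \in M(A)\setminus M(B)$ with $\textup{supp}\,\psi \subsetneq E$. I claim $A_E \cap B$ is not maximal in $B$: the candidate intermediate algebra is $D = A_{\textup{supp}\,\psi} \cap B$. One checks $C \subseteq D \subseteq B$ from $\textup{supp}\,\psi\subseteq E$, so $A_E\subseteq A_{\textup{supp}\,\psi}$ is false — rather $A_{\textup{supp}\,\psi}\supseteq A_E$? No: $E'\subseteq E$ gives $\hat{E'}\subseteq\hat E$ and $A_{E'}\subseteq A_E$. So instead take $D$ with $C\subseteq D\subseteq B$; the correct choice is to enlarge, producing $D$ with $M(D) = M(C)\setminus\{\psi\}$-type behaviour. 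Concretely, since $\psi\notin M(B)$, using (a) for $B$ pick a unimodular $u\in A$, invertible (hence nonvanishing) on $M(B)$, with $\psi(u)=0$; set $D = [C, \overline u\,]$. Then $\overline u\in B$ because $u$ is invertible in $B$, so $D\subseteq B$; and $\psi$ does not extend to $D$ (as $\psi(u)=0$ would force $\psi(\overline u)=\infty$), so $\psi\in M(C)\setminus M(D)$, giving $D\subsetneq B$ only if also $D\neq B$, which holds because $\phi\in M(C)\subseteq M(D)$ while $\phi\notin M(B)$. Thus $C\subseteq D\subsetneq B$; and $D\neq C$ because $\psi\in M(C)$ (as $\textup{supp}\,\psi\subsetneq E$ means $\psi\in\hat E\subseteq M(C)$) but $\psi\notin M(D)$. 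Hence $C$ is not maximal in $B$.

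\emph{Conversely, suppose $C = B\cap A_E$ is not maximal in $B$}; choose $D$ with $C \subsetneq D \subsetneq B$. Since $D\subsetneq B$ there is $\psi\in M(D)\setminus M(B)$; and since $C\subsetneq D$ we may arrange (by picking $D$ minimal over $C$, or just by the strict inclusion) that $\psi\notin M(A_E)$, i.e. $\textup{supp}\,\psi\not\subseteq E$. Now I run the support-set comparison: both $\phi$ and $\psi$ lie in $M(A)\setminus M(B)$, and I must produce $\psi'\in M(A)\setminus M(B)$ with $\textup{supp}\,\psi'\subsetneq\textup{supp}\,\phi = E$, contradicting minimality of $\phi$. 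This is precisely where hypothesis (b) enters: apply it with $B_1 = B$, $B_2 = A_E$ (both contain $A$) and the character $\psi\in M(A)\setminus(M(B)\cup M(A_E))$ — the second exclusion is the arrangement $\textup{supp}\,\psi\not\subseteq E$, the first is $\psi\notin M(B)$ — to get a unimodular $u\in A$ with $0\notin u(M(B)\cup M(A_E))$ and $\psi(u)=0$. Then $\overline u\in C = B\cap A_E$ (it is in $B$ since $u$ is invertible there, and in $A_E$ since $u$ is invertible over $A_E$), so $\overline u\in D$, whence $\psi(u)=0$ is incompatible with $\psi\in M(D)$ — contradiction, showing no such $D$ exists unless $\phi$ fails to be minimal. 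Turning this around: if $\phi$ \emph{is} minimal, the contradiction shows $C$ is maximal in $B$; if $\phi$ is not minimal we already built a strict intermediate $D$ above. The two halves together give the biconditional.

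\emph{Main obstacle.} The routine algebra (that $\overline u$ lands in the right subalgebras, that the various inclusions are strict) is straightforward once the identification $M(C) = M(B)\cup\hat E$ is in hand. The genuinely delicate point is the clean deployment of hypothesis (b): one must choose the \emph{right} witness character $\psi$ — not $\phi$ itself but one arising from the failure of maximality — and verify it lies outside both $M(B)$ and $M(A_E)$, the latter being the assertion $\textup{supp}\,\psi\not\subseteq E$, which requires knowing that an intermediate $D\supsetneq C$ must ``see'' a character whose support escapes $E$. Establishing that — equivalently, that $C = B\cap A_E$ already captures \emph{all} of $B$'s characters supported in $E$, so any genuine enlargement reaches outside $\hat E$ — is the crux, and I expect it to rest on hypothesis (a) applied to $D$ together with the observation that a generator $\overline{u_\alpha}$ of $D$ over $A$ not already in $C$ must have $u_\alpha$ vanishing somewhere on $\hat E\setminus M(D)$, forcing the new character of $D$ to have support meeting $X\setminus E$.
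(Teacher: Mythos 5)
Your overall architecture (first identify $M(B\cap A_E)$ with $M(B)\cup M(A_E)$, where $E=\textup{supp}\,\phi$, then use hypothesis (b) to manufacture unimodular witnesses) is the same as the paper's, but both directions contain errors that break the argument. The first problem is a reversed inclusion: for $E'\subseteq E$ one has $A_E\subseteq A_{E'}$, not $A_{E'}\subseteq A_E$ (if $f|E\in A|E$ then $f|E'\in A|E'$; shrinking the set enlarges the algebra, consistent with $M(A_{E'})=X\cup\hat{E'}\subseteq X\cup\hat{E}$). This sign error led you to abandon your first — and correct — candidate: when $\psi$ witnesses non-minimality of $\phi$, the algebra $A_2=B\cap A_{\textup{supp}\,\psi}$ does satisfy $C\subseteq A_2\subseteq B$, and both inclusions are made strict by applying (b) twice: once at $\psi$ (giving a unimodular $u\in B^{-1}$ with $\psi(u)=0$, so $\overline{u}\in B\setminus A_2$), and once at $\phi$ with $B_1=B$, $B_2=A_{\textup{supp}\,\psi}$ (legitimate because $\textup{supp}\,\phi\not\subseteq\textup{supp}\,\psi$), giving $\overline{v}\in A_2\setminus C$. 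Your substitute $D=[C,\overline{u}]$ cannot be shown proper in $B$: you write $\phi\in M(C)\subseteq M(D)$, but $C\subseteq D$ gives $M(D)\subseteq M(C)$, and nothing in the choice of $u$ controls $|\phi(u)|$, so $\phi$ need not extend to $D$ and $D=B$ is not excluded.

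The converse direction rests on an impossible claim. If $C\subsetneq D\subsetneq B$, then $M(D)\subseteq M(C)=M(B)\cup M(A_E)$ by your own key identity, so \emph{every} $\psi\in M(D)\setminus M(B)$ automatically has $\textup{supp}\,\psi\subseteq E$; you cannot ``arrange'' $\textup{supp}\,\psi\not\subseteq E$, and the contradiction you then derive merely re-proves this. What is actually needed, and what the paper supplies, is: (i) hypothesis (a) applied to $D$ to produce a unimodular $u\in A\cap D^{-1}$ with $\overline{u}\in D\setminus C$, hence $\overline{u}\notin A_E$, hence $u$ nonconstant on $E$; (ii) the logmodular fact that a unimodular $u$ with $|\tau(u)|=1$ must be constant on $\textup{supp}\,\tau$, so that any $\tau\in M(D)\setminus M(B)$ — which necessarily has $\textup{supp}\,\tau\subseteq E$, hence $=E$ by minimality — would force $u$ constant on $E$, a contradiction, whence $M(D)=M(B)$; and (iii) hypothesis (a) again to pass from $M(D)=M(B)$ to $D=B$. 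Note also that your assertion that $D\subsetneq B$ yields some $\psi\in M(D)\setminus M(B)$ is exactly the contrapositive of (iii) and is not automatic. Finally, the identity $M(B\cap A_E)=M(B)\cup M(A_E)$ is only gestured at; it does follow quickly from (b) (a character of $B\cap A_E$ outside both spectra would annihilate a unimodular $u$ whose conjugate lies in $B\cap A_E$), but that step needs to be written out since everything else leans on it.
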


The proof depends on the following lemma.

\begin{lem} Let $A$ be a logmodular algebra. Suppose that whenever $B_1$ and $B_2$ are subalgebras of $C(X)$ with $A \subset B_1 \cup B_2$ and $\psi \in M(A) \setminus \left(M(B_1) \cup M(B_2)\right)$, then there exists $u \in A$, unimodular, such that $0 \notin u(M(B_1) \cup M(B_2))$ but $\psi(u) = 0$. Then
\begin{eqnarray}\label{maximalideal} 
M(B \cap A_{\textup{supp}\, \phi})  = M(B)  \cup M(A_{\textup{supp}\, \phi}). \end{eqnarray}

 \end{lem}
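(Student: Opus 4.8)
The plan is to prove the equality of maximal ideal spaces in \eqref{maximalideal} by showing mutual inclusion, where the real content is the inclusion $M(B\cap A_{\textup{supp}\,\phi}) \subseteq M(B)\cup M(A_{\textup{supp}\,\phi})$. First I would dispose of the easy inclusion: both $B$ and $A_{\textup{supp}\,\phi}$ contain $B\cap A_{\textup{supp}\,\phi}$, so restriction of characters gives maps $M(B)\to M(B\cap A_{\textup{supp}\,\phi})$ and $M(A_{\textup{supp}\,\phi})\to M(B\cap A_{\textup{supp}\,\phi})$; since $A$ is logmodular, it has unique representing measures, and these restriction maps are injective, so $M(B)\cup M(A_{\textup{supp}\,\phi})$ sits inside $M(B\cap A_{\textup{supp}\,\phi})$ (all identifications being via the common representing measures on $X$, which is the Shilov boundary of every algebra in sight).

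For the reverse inclusion, take $\psi\in M(B\cap A_{\textup{supp}\,\phi})$ and view it, via its representing measure on $X$, as an element of $M(A)$. Suppose for contradiction that $\psi\notin M(B)$ and $\psi\notin M(A_{\textup{supp}\,\phi})$. I would like to apply the hypothesis of the lemma with $B_1=B$ and $B_2=A_{\textup{supp}\,\phi}$: since $\psi\in M(A)\setminus(M(B)\cup M(A_{\textup{supp}\,\phi}))$, there is a unimodular $u\in A$ with $0\notin u(M(B)\cup M(A_{\textup{supp}\,\phi}))$ but $\psi(u)=0$. The point is that $u$ is then invertible in both $B$ (since $0\notin u(M(B))$, $u$ does not vanish on the maximal ideal space of the larger algebra $B$, hence $\overline u = u^{-1}\in B$ because $B$ is a uniform algebra and $u\bar u=1$) and in $A_{\textup{supp}\,\phi}$ by the same reasoning, so $\bar u = u^{-1}$ lies in $B\cap A_{\textup{supp}\,\phi}$. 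But then $1 = \psi(u\bar u) = \psi(u)\psi(\bar u) = 0\cdot\psi(\bar u) = 0$, a contradiction. Hence $\psi\in M(B)\cup M(A_{\textup{supp}\,\phi})$.

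The step I expect to be the main obstacle is the bookkeeping needed to make the character identifications rigorous — namely, verifying that $\psi\in M(B\cap A_{\textup{supp}\,\phi})$ genuinely extends (uniquely) to a character of $A$, so that the lemma's hypothesis is applicable. This uses that $A$ is logmodular with unique representing measures on $X$: one represents $\psi$ on the small algebra $B\cap A_{\textup{supp}\,\phi}$ by a probability measure on $X$ (its Shilov boundary), restricts to $A$, and invokes uniqueness to see that the induced character of $A$ does not depend on choices and is the one the lemma's hypothesis speaks about. I would also want to check carefully that $0\notin u(M(B))$ really does force $u\in B^{-1}$ — this is the standard fact that in a uniform algebra a function is invertible iff its Gelfand transform is nowhere zero — and similarly for $A_{\textup{supp}\,\phi}$, whose maximal ideal space is $X\cup\widehat{\textup{supp}\,\phi}$ as recorded earlier. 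Once these identifications are in place, the contradiction argument above closes the proof cleanly, and the lemma follows.
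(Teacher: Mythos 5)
Your proposal is correct and follows essentially the same route as the paper: apply the hypothesis with $B_1=B$ and $B_2=A_{\textup{supp}\,\phi}$ to produce a unimodular $u\in A$ invertible in $B\cap A_{\textup{supp}\,\phi}$ with $\psi(u)=0$, contradicting $\psi\in M(B\cap A_{\textup{supp}\,\phi})$. The extra care you take with the character identifications (via unique representing measures on $X$) is sound and merely makes explicit what the paper leaves implicit.
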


\begin{proof}

We let $A_1 = B \cap A_{\textup{supp}\, \phi}$. 

One containment is obvious, so suppose $\psi \in M(B \cap A_{\textup{supp}\, \phi}) = M(A_1).$ In particular, $\psi \in M(A)$.  Suppose that $\psi \notin M(B) \cup M(A_{\textup{supp}\, \phi})$.  By our assumption, there exists $u \in A$ such that $u$ does not vanish on $M(A_{\textup{supp}\, \phi}) \cup M(B)$, but $\psi(u) = 0$. Therefore, $\overline{u} \in A_1$. But $\psi \in M(A_1)$ and $u \in A_1^{-1}$, so this is impossible.

\end{proof}

\begin{proof}[Proof of Theorem~\ref{minimpliesmax}]
Assume first that $\textup{supp}\, \phi$ is minimal. If $A_1$ is not maximal, there exists an algebra $D$ with $A_1 \subset D \subset B$. By assumption (part (a)), there exists $u \in A \cap D^{-1}$ unimodular and $u \notin A_1^{-1}$. Since $u \in D^{-1}$, we know that $\overline{u} \in B$. Thus, $u \notin A_1^{-1}$ implies that $\overline{u} \notin A_{\textup{supp}\, \phi}$. Thus, $|\phi(u)| < 1$ and $u$ is not constant on the support of $\phi$. 

Fix $\tau \in M(D)$ and note that $|\tau(u)| = 1$. If $\tau \notin M(B)$, then $\tau \in M(A) \setminus M(B)$. Since $\tau \in M(A_1)$, we know from the lemma above that, in this case, $\tau \in M(A_{\textup{supp}\, \phi})$. Since the support of $\phi$ is minimal, we have $\textup{supp}\, \phi = \textup{supp}\, \tau$. But $u$ not constant on the first set, while it is constant on the latter set. Therefore, this is impossible. Therefore, $\tau \in M(B)$. Thus $M(D) = M(B)$ and every unimodular function in $A$ that is invertible in $B$ is also invertible in $D$. Thus $B = D$, and $A_1$ is maximal. 

Now suppose that $A_1$ is maximal. Suppose that $\psi \in M(A) \setminus M(B)$ and $\textup{supp}\, \psi \subset \textup{supp}\, \phi$. Then 
$\psi \in M(A_{\textup{supp}\, \phi}) \subseteq M(A_1)$. By hypothesis (part (b)), however, there exists $u \in A$ unimodular such that $u \in B^{-1}$ and $\psi(u) = 0$.  Now  $\phi \notin M(B) \cup M(A_{\textup{supp}\, \psi})$. Letting $A_2 = B \cap A_{\textup{supp}\, \psi}$ and again applying hypothesis (b), we find that there exists $v \in A_2^{-1}$, unimodular, such that $\phi(v) = 0$.  Therefore $A_1 \subset A_2$, and since $\overline{u} \in B \setminus A_2$, we see that $A_1$ is not maximal, a contradiction. \end{proof}

The next example illustrates the relative maximality discussed above and can be found in \cite{GI}, but we present the short proof here. Note that every closed  subalgebra $B$ of $L^\infty$ with $B \supset H^\infty$ is, by the Chang-Marshall theorem, generated by $H^\infty$ and the conjugates of interpolating Blaschke products. Furthermore, the second hypothesis of Theorem~\ref{minimpliesmax} is satisfied as well, though the justification is more work: Suppose that $H^\infty \subset B_j$ for $j = 1, 2$ and $\psi \in M(H^\infty) \setminus (M(B_1) \cup M(B_2))$. Choose an open set $V$ about $\psi$ that is disjoint from $M(B_1) \cup M(B_2)$. By \cite[Corollary 3.2]{GorkinMortini}, there is a point $\psi_0$ in $V \cap M(H^\infty_{\textup{supp}\, \psi})$ such that $\psi_0$ is in the closure of an interpolating sequence $(z_n)$.  Choose a subset of $(z_n)$, denoted by $(z_{n, 1})$, capturing $\psi_0$ in its closure and such that the closure of the sequence is entirely contained in $V$. Then a result of Hoffman \cite{H1} implies that the corresponding Blaschke product $b$ can vanish at $x$ in $M(B_j)$ if  and only if $x$ is in the closure of $(z_{n, 1})$. Therefore, $b$ will not vanish on $M(B_j)$, while $\psi_0(b) = 0$. Since the support set of $\psi_0$ is contained in that of $\psi$, we see that $|\psi(b)| < 1$. Then $u = (b - \psi(b))/(1 - \overline{\psi(b)} b)$ satisfies the hypothesis of Theorem~\ref{minimpliesmax}, part (b).

\bigskip
\begin{exam} \cite{GI} The following is an example of a minimal support set in a strongly logmodular algebra on a totally disconnected space. 
\end{exam}

This result relies on the fact that any point in the closure of a thin sequence has a maximal support set. This is an unpublished result of Hoffman and can be found in \cite{Budde} .

\begin{proof} Let $b$ be a thin Blaschke product; that is, the zeroes of $b$ are $(z_n)$ and they satisfy
$\lim (1 - |z_n|^2) |b^\prime(z_n)| = 1.$  Consider $H^\infty[\overline{b}]$ and let $\phi \in M(H^\infty) \setminus M(H^\infty[\overline{b}])$. We claim that $\phi$ has a minimal support set; that is, if $\psi \in M(H^\infty) \setminus M(H^\infty[\overline{b}])$ then the support of $\psi$ is not properly contained in the support of $\phi$. To see this, we argue by contradiction: Suppose that $\textup{supp}\, \psi \subset \textup{supp}\, \phi$.  Since $\psi \in M(H^\infty) \setminus M(H^\infty[\overline{b}])$, we must have $|\psi(b)|< 1$. So $\overline{b} \notin H^\infty|\textup{supp}\, \psi$ and there exists $\psi_1 \in M(H^\infty|\textup{supp}\,\psi)$ with $\psi_1(b) = 0$. Therefore,  (by Hoffman's result, as stated in \cite{Budde}) $\textup{supp}\, \psi_1$ is maximal, so $\textup{supp}\,\psi_1 = \textup{supp}~\phi$. But then $\textup{supp}\, \psi = \textup{supp}~\phi$ as well, establishing the contradiction. So $\phi$ is a minimal support point and $H^\infty_{\textup{supp}\,\phi} \cap H^\infty[\overline{b}]$ is maximal in $H^\infty[\overline{b}]$. 

\end{proof}

This algebra isn't pervasive, of course, because it is a subalgebra of $L^\infty$ containing $H^\infty$.

\bigskip
\begin{exam} Let $A = H^\infty$ and $B = H^\infty + C$. Then $H^\infty$ is maximal in $H^\infty + C$ and every point in $M(A)\setminus M(B)$ has minimal support set. Further, $H^\infty$ is relatively pervasive in $H^\infty + C$ (in a sense to be made precise below). \end{exam}

\begin{proof} It is well known that $H^\infty$ is maximal in $H^\infty + C$ (\cite[p. 376]{Garnett}) and that $M(A) \setminus M(B) = \mathbb{D}$; that is, evaluation at points of the open unit disk $\mathbb{D}$. 

The algebra $H^\infty$ is also ``relatively'' pervasive in $H^\infty + C$ in the following sense: Let $F$ be a closed and proper subset of the Shilov boundary $X$. Then $U = X \setminus F$ is open. Choose a smaller open set $U_1 \subset U$ such that its closure is also contained in $U$. Since $X$ is extremally disconnected, $W_1 = \textup{clos}(U_1)$ is clopen. Thus, by (\cite[p. 376]{Garnett}) the closed subalgebra of $L^\infty$ generated by $H^\infty$ and $\chi_{W_1}$ contains $H^\infty + C$. So for each $h \in H^\infty + C$, there exist $f_n, g_n \in H^\infty$ such that 
$\|h - (f_n + g_n \chi_{W_1})\|_X \to 0.$ Since $\chi_{W_1} = 0$ on $F$, we have $\|h - f_n\|_F \to 0,$ completing the proof.
\end{proof}

\section{Final Questions}
Our results and examples raise some questions:

1. We have seen that a maximal algebra $A$ 
on $X$ may have $M(A)$
either more or less complicated, topologically,
than $X$. We have also seen that a maximal
algebra need not have a homotopically trivial
$M(A)$. What, if any, topological conditions
on $X$ and $M(B)$ will guarantee that an
essentially pervasive $B$ is maximal?  
It can't just be a matter of comparing
$\pi_1$'s, because of the example
in Subsection \ref{SS:Warc}.

2. One common feature of all our examples
is that the maximal algebras have just one
nontrivial part, but this is not necessary
for maximality. A theorem of
Hoffman and Singer  \cite[Theorem 4.7]{HS2}
implies that
the big disk algebra is maximal on the torus,
and this algebra has many nontrivial parts.
However, each nontrivial part of the
big disk algebra is dense in the whole
$M(A)$, so one may ask whether it is necessary
for maximality
that $M(A)$ lies in the closure
of each nontrivial part.  
However, even if necessary for maximality,
this property will not be enough
to distinguish the maximal from the 
essentially pervasive. The counterexamples
we have seen 
have the feature that the part meets $X$.

3. Suppose $A$ has just one nontrivial part $P$, and $P$ is finitely-connected,
and the weak-star and metric topologies agree on $P$. Is $A$
maximal in $C(X)$?  What if we assume that all points
of $X$ are peak points?

4. Can a pervasive 
algebra have more than one non-simply-connected nontrivial part?
What about a maximal algebra?

5. Can a strongly logmodular essential algebra be pervasive? 

6. It would
be interesting if we could even show that you can't have
a proper pervasive algebra on $X=M(A)=[0,1]$. As far as we know,
this is open.

7. Is $A(U)$ maximal in $C(X)$, whenever $U\subset\hat{\mathbb{C}}$
is open and connected, and
$X=\textup{bdy}(U)$ is the Shilov boundary of 
$A(U)$?

8. Suppose $A$ is essential and $\phi\in \mathcal{D}(A)$ implies
$\underline{\textup{supp}}\,\phi=E$ and there are no
completely-singular annihilating measures for $A$.
Is $A$ pervasive?

\section*{Acknowledgments}
The second author was partially-supported by
the HCAA network.

Part of this work was done at the meeting
on Banach Algebras held at 
Bedlewo in July 2009. The support
for the meeting by the Polish Academy of Sciences, the European Science Foundation under
the ESF-EMS-ERCOM partnership, and the Faculty of Mathematics and Computer Science
of the Adam Mickiewicz University at Poznan is gratefully acknowledged. The first author is grateful to the London Mathematical Society for travel funding.


\begin{thebibliography}{99}

\bibitem{AxlerShields} Axler, S.; Shields, A.,
Algebras generated by analytic and harmonic functions. 
Indiana Univ. Math. J. 36 (1987), no. 3, 631--638. 

\bibitem{AxlerZheng} Axler, S.; Zheng, D.,Toeplitz algebras on the disk. J. Funct. Anal. 243 (2007), no. 1, 67--86.

\bibitem{BCGJ}
Bishop, C. J.; Carleson, L.; Garnett, J. B.; Jones, P. W.,
Harmonic measures supported on curves. 
Pacific J. Math. 138 (1989), no. 2, 233--236.

\bibitem{Brouwer}
Brouwer L. E. J.,  Zur Analysis situs. Math. Ann. 68 (1909) 422--434.

\bibitem{Browder} Browder, A., Introduction to Function Algebras.
Benjamin. 1969.

\bibitem{Budde} Budde, P.,
Support sets and Gleason parts. 
Michigan Math. J. 37 (1990), no. 3, 367--383.

\bibitem{BW} Browder, A.; Wermer, J., A method for constructing
Dirichlet algebras. Proc. AMS 15 (1964) 546-52.

\bibitem{Cerych} \u{C}erych, J., A word on pervasive
function spaces. In \lq\lq Complex Analysis and Applications.\rq\rq Varna, `80,
 Sofia. (1984), pp. 107--109.

\bibitem{Chang} Chang, S.Y.- A.,
A characterization of Douglas subalgebras. 
Acta Math. 137 (1976), no. 2, 82--89.

\bibitem{ChangMarshall2} Chang, S.Y.-A; Marshall,D. E.,
Some algebras of bounded analytic functions containing the disk algebra.
Banach spaces of analytic functions (Proc.Pelczynski Conf., Kent State Univ., Kent, Ohio, 1976),
pp. 12Ð20. Lecture Notes in Math., Vol. 604, Springer, Berlin, 1977.

\bibitem{Davie} Davie, A.M., Real annihilating measures for
$R(K)$. J. Functional Analysis 6 (1970) 357--86.


\bibitem{Younis} Deeb, W.; Younis, R., Representing measures for $H^\infty$ and extreme points. Houston J. Math. 11 (1985), no. 3, 293--297.

\bibitem{dePaepe} de Paepe, P.J.; Wiegericnck, J., A note on pervasive 
function algebras. Czech. Math. J. 41 (1991) 61--63.

\bibitem{Ga} Gamelin, T., Uniform Algebras. Prentice-Hall. 1969.

\bibitem{Ga2} Gamelin, T., Lectures on $H^\infty(D)$. 
Notas de Matem\'atica,
Universidad Nacional de la Plata, no. 71. 1971.

\bibitem{GG1} Gamelin, T.; Garnett, J., Pointwise
bounded approximation and Dirichlet algebras. J.
Funct. Anal. 8 (1971) 360--404.

\bibitem{GG2} Gamelin, T.; Garnett, J., Distinguished homomorphisms and
fibre algebras. Amer. J. Math. 92 (1970) 455--474.


\bibitem{GR} Gamelin, T. W.; Rossi, H.,
Jensen measures and algebras of analytic functions. 1966 Function Algebras (Proc. Internat. Sympos. on Function Algebras, Tulane Univ., 1965) pp. 15--35 Scott-Foresman, Chicago, Ill.

\bibitem{Garnett} Garnett, J. Bounded Analytic Functions. Academic Press. 
1981.

\bibitem{GorkinMortini} Gorkin, P.; Mortini, R., Interpolating Blaschke products and factorization in Douglas algebras. Michigan Math. J. 38 (1991), no. 1, 147--160.

\bibitem{GI} Guillory, C.; Izuchi, K.,  Minimal envelopes of Douglas algebras and Bourgain algebras. Houston J. Math. 19 (1993), no. 2, 201--222.

\bibitem{GIS} Guillory, C.; Izuchi, K.; Sarason, D., Interpolating Blaschke products and division in Douglas algebras. Proc. Roy. Irish Acad. Sect. A 84 (1984), no. 1, 1--7. 

\bibitem{H} Hoffman, K., Analytic functions and logmodular
Banach algebras. Acta Math. 108 (1962) 271--317.

\bibitem{H1} Hoffman, K., Bounded analytic functions and Gleason parts. Ann. of Math. (2) 86 (1967) 74--111.

\bibitem{HS} Hoffman, K.; Singer, I. M., Maximal algebras of continuous functions. Acta Math. 103 (1960) 217--241.

\bibitem{HS2} Hoffman, K.; Singer, I. M., Maximal subalgebras of $C(\Gamma)$. Amer. J. Math. 79 (1957), 295--305. 



\bibitem{Izzo1} Izzo, A. J.,
Algebras generated by holomorphic and harmonic functions on the disc. 
Bull. London Math. Soc. 37 (2005), no. 5, 761--770. 

\bibitem{Izzo2} Izzo, A. J.,
Algebras containing bounded holomorphic functions. 
Indiana Univ. Math. J. 52 (2003), no. 5, 1305Ð1342.

\bibitem{Krieger}
Krieger, M.H., Doing Mathematics. World Scientific. 2003. 

\bibitem{NOS} Netuka, I; O'Farrell, A.G.; Sanabria, A., Pervasive 
algebras of analytic functions.
J. Approx. Theory, 106 (2000) 262--75. 

\bibitem{OS}  O'Farrell, A. G.; Sanabria-Garc'a, A., Pervasive algebras of analytic functions on Riemann surfaces. Math. Proc. R. Ir. Acad. 101A (2001), no. 1, 87--94.

\bibitem{Marshall} Marshall, D. E.,
Subalgebras of $L^{\infty }$ containing $H^{\infty }$. 
Acta Math. 137 (1976), no. 2, 91--98.

\bibitem{Sarason} Sarason, D., Algebras of functions 
on the unit circle. Bull. AMS 79 (1973) 286-99.

\bibitem{Su} Sundberg, C., A note on algebras between $L^{\infty }$ and $H^{\infty }$. Rocky Mountain J. Math. 11 (1981), no. 2, 333--335.

\bibitem{W} Wermer, J., Polynomial approximation on an
arc in $\C^3$. Ann. of Math. 62 (1955) 269-70.

\bibitem{Wolff} Wolff, T. H., Two algebras of bounded functions. Duke Math. J. 49 (1982), no. 2, 321--328.

\bibitem{Yon} Yoneyama, K., Theory of continuous sets of points (not finished). 
T\^ohoku Math J 12 (1917) 43-158.

\bibitem{Yon2} Yoneyama, K., Theory of continuous sets of points (finished). 
T\^ohoku Math J 13 (1918) 33-157.


\bibitem{Web}$//$www.cut-the-knot.org$/$do\_you\_know$/$brouwer.shtml 
\end{thebibliography}
\end{document}